\def\ps@pprintTitle{%
}
\tikzset{cross/.style={cross out, draw=black, minimum size=2*(#1-\pgflinewidth), inner sep=0pt, outer sep=0pt},
cross/.default={4pt}}
\tikzstyle std=[line width=0.7pt]   
\tikzstyle stdthin=[line width=0.3pt]   
\tikzstyle stdthick=[line width=1.0pt]   
\tikzstyle fwd=[line width=0.7pt, ->]   
\tikzstyle fwdthin=[line width=0.3pt, ->]   
\tikzstyle fwdthick=[line width=1.0pt, ->]   
\tikzstyle fwddash=[line width=0.7pt, dashed, ->]   
\tikzstyle bwd=[double, line width=0.3pt, ->]  
\tikzstyle refl=[double,  dashed, line width=0.2pt, ->]    
\tikzstyle{every node}=[font=\small] 
\tikzset{
	>=stealth', 
	invisible/.style={opacity=0}, 
	alt/.code args={<#1>#2#3}{\alt<#1>{\pgfkeysalso{#2}}{\pgfkeysalso{#3}}}, 
	visible on/.style={alt=#1{}{invisible}}, 
	smallnode/.style={circle, fill=black, thick, inner sep=1pt, minimum size=1.5pt}, 
	punkt/.style={
			rectangle,
			rounded corners,
			draw=black, very thick,
			text width=5.5em,
			minimum height=2em,
			text centered},
	punkt_big/.style={
			rectangle,
			rounded corners,
			draw=black, very thick,
			text width=7em,
			minimum height=2em,
			text centered},
}
\newcommand{\cut}[1]{{}}
\newcommand*{\rom}[1]{\expandafter\@slowromancap\romannumeral #1@}
\newcommand{\cA}{{\mathcal{A}}}
\newcommand{\cB}{{\mathcal{B}}}
\newcommand{\cC}{{\mathcal{C}}}
\newcommand{\cD}{{\mathcal{D}}}
\newcommand{\cF}{{\mathcal{F}}}
\newcommand{\cG}{{\mathcal{G}}}
\newcommand{\cH}{{\mathcal{H}}}
\newcommand{\cZ}{{\mathcal{Z}}}
\DeclareFontFamily{U}{ntxmia}{}
\DeclareFontShape{U}{ntxmia}{m}{it}{<-> ntxmia }{}
\DeclareFontShape{U}{ntxmia}{b}{it}{<-> ntxbmia }{}
\DeclareSymbolFont{lettersA}{U}{ntxmia}{m}{it}
\NewDocumentCommand{\varmathbb}{m}
 {
  \tl_map_inline:nn { #1 }
  {
    \use:c { varbb##1 }
  }
 }
\DeclareMathSymbol{varbb#1}{\mathord}{lettersA}{\int_eval:n { `#1+67 }}
\DeclareMathSymbol{varbbk}{\mathord}{lettersA}{169}
\DeclareMathOperator*{\argmin}{argmin}
\DeclarePairedDelimiter{\dotp}{\langle}{\rangle}
\renewcommand{\Re}{\operatorname{Re}} 	
\newcommand{\prox}{\mathrm{Prox}}
\newcommand{\dom}{\mathrm{dom}\,} 
\newcommand{\reals}{\mathbb{R}}
\newcommand{\complex}{\mathbb{C}}
\newcommand{\binfty}{{\boldsymbol \infty}}
\DeclareFontFamily{U}{tipa}{}
\DeclareFontShape{U}{tipa}{m}{n}{<->tipa10}{}
\newcommand{\arc@char}{{\usefont{U}{tipa}{m}{n}\symbol{62}}}%
\newcommand{\arc}[1]{\mathpalette\arc@arc{#1}}
\newcommand{\arc@arc}[2]{%
  \sbox0{$\m@th#1#2$}%
  \vbox{
    \hbox{\resizebox{\wd0}{\height}{\arc@char}}
    \nointerlineskip
    \box0
  }%
}
\definecolor{lightgrey}{gray}{0.8}
\definecolor{medgrey}{gray}{0.6}
\definecolor{darkgrey}{gray}{0.4}
\newcommand{\opA}{{\varmathbb{A}}}
\newcommand{\opB}{{\varmathbb{B}}}
\newcommand{\opC}{{\varmathbb{C}}}
\newcommand{\opI}{{\varmathbb{I}}}
\newcommand{\opJ}{{\varmathbb{J}}}
\newcommand{\opT}{{\varmathbb{T}}}
\newcommand{\Di}{{\mathrm{Disk}}}
\newcommand{\Ci}{{\mathrm{Circ}}}
\DeclarePairedDelimiter{\norm}{\lVert}{\rVert}
\DeclarePairedDelimiter{\abs}{\lvert}{\rvert}
\DeclarePairedDelimiter{\babs}{\bigg\lvert}{\bigg\rvert}
\newlength{\doublefracgap}
\DeclareRobustCommand{\doublefrac}[2]{%
  \mathinner{\mathpalette\doublefrac@{{#1}{#2}}}%
}
\newcommand{\doublefrac@}[2]{\doublefrac@@#1#2}
\newcommand{\doublefrac@@}[3]{%
  \ooalign{%
    \raisebox{\doublefracgap}{$\m@th#1\frac{#2}{\phantom{#3}}$}\cr
    \raisebox{-\doublefracgap}{$\m@th#1\frac{\phantom{#2}}{#3}$}\cr
  }%
}
\newcommand{\para}[1]{\left({#1}\right)}
\newcommand{\R}{\mathbb{R}}
\newcommand{\resa}[1]{\opJ_{\alpha #1}}
\newcommand{\srg}[1]{\cG\left(#1\right)}
\newcommand{\diffang}[4]{%
  	\begingroup 
		\def\firstop{#1}%
		\def\secondop{#2}%
		\def\identity{\opI}%
		\ifx\firstop\identity
		\def\firstop{}%
		\fi
		\ifx\secondop\identity
		\def\secondop{}%
		\fi
		\angle (\firstop #3 - \firstop #4, \secondop #3 - \secondop #4)
  	\endgroup
}
\newcommand{\diffnormfrac}[4]{%
  	\begingroup 
		\def\firstop{#1}%
		\def\secondop{#2}%
		\def\identity{\opI}%
		\ifx\firstop\identity
		\def\firstop{}%
		\fi
		\ifx\secondop\identity
		\def\secondop{}%
		\fi
		\frac{\norm{\firstop #3 - \firstop #4}}{\norm{\secondop #3 - \secondop #4}}
  	\endgroup
}
\newcommand{\proxa}[1]{\prox_{\alpha #1}}
\newtheorem{theorem}{Theorem}
\newtheorem{fact}{Fact}
\journal{Journal of Mathematical Analysis and Applications}
\begin{document}

\begin{frontmatter}



\title{Convergence Analyses of Davis--Yin Splitting via Scaled Relative Graphs II:\\
Convex Optimization Problems}


\author[cmu]{Soheun Yi}
\ead{soheuny@andrew.cmu.edu}

\author[ucla]{Ernest K. Ryu\corref{cor1}}
\ead{eryu@math.ucla.edu}

\cortext[cor1]{Corresponding author}

\affiliation[cmu]{
	organization={Carnegie Mellon University, Department of Statistics and Data Science},
	addressline={5000 Forbes Avenue},
	city={Pittsburgh},
	postcode={15213},
	state={PA},
	country={USA}
}
\affiliation[ucla]{
organization={University of California, Los Angeles, Department of Mathematics},
addressline={520 Portola Plaza}, 
city={Los Angeles},
postcode={90095}, 
state={CA},
country={USA}}

\begin{abstract}
    The prior work of [SIAM J. Optim., 2025] used scaled relative graphs (SRG) to analyze the convergence of Davis--Yin splitting (DYS) iterations on monotone inclusion problems. In this work, we use this machinery to analyze DYS iterations on convex optimization problems and obtain state-of-the-art linear convergence rates.
\end{abstract}



\begin{keyword}
	Convex optimization \sep
	splitting methods \sep
	first-order methods \sep
	monotone operators \sep
	scaled relative graph

	\MSC[2020] 47H05 \sep 47H09 \sep 51M04 \sep 90C25 \sep 49M27
\end{keyword}





\end{frontmatter}


\section{Introduction}
Consider the problem 
	\begin{equation}\label{eq:primal}
 \begin{array}{ll}
\underset{x\in\cH}{\mbox{minimize}}&f(x) + g(x) + h(x),
 \end{array}
	\end{equation}
	where $\cH$ is a Hilbert space, $f$, $g$, and $h$ are convex, closed, and proper functions, and $h$ is differentiable with $L$-Lipschitz continuous gradients.
 The Davis--Yin splitting (DYS) \cite{davis_three-operator_2017} solves this problem by performing the fixed-point iteration with 
	\begin{equation}\label{eq:dys-subdiff-1}
		\opT = \opI - \proxa{g} + \proxa{f} (2\proxa{g} - \opI - \alpha \nabla h \proxa{g}),
	\end{equation}
	where $\alpha > 0$,
  $\prox_{\alpha f}$ and $\prox_{\alpha g}$ are the proximal operators with respect to $\alpha f$ and $\alpha g$, and $\opI$ is the identity mapping.
  DYS has been used as a building block for various algorithms for a diverse range of optimization problems \cite{yan2018new,wang2017three,carrillo2022primal,van2020three, weylandt2019splitting,heaton2021learn}.

 Much prior work has been dedicated to analyzing the convergence rate of DYS iterations  \cite{davis_three-operator_2017, Aragon-ArtachoTorregrosa-Belen2022_direct, DaoPhan2021_adaptive, pedregosa_convergence_2016, ryu_operator_2020, wang_robust_2019, CondatRichtarik2022_randprox}.	Recently, Lee, Yi, and Ryu \cite{LeeYiRyu2022_convergencea} leveraged the recently introduced scaled relative graphs (SRG) \cite{ryu_scaled_2021} to obtain tighter analyses.
	However, the focus of \cite{LeeYiRyu2022_convergencea} was on DYS applied to the general class of monotone operators, rather than the narrower class of subdifferential operators.

In this paper, we use the SRG theory of \cite{LeeYiRyu2022_convergencea} to analyze the linear convergence rates of DYS applied to convex optimization problems and obtain state-of-the-art rates.


	\subsection{Prior works}
The theory of monotone operators and splitting methods is a powerful tool for deriving and analyzing a wide range of convex optimization algorithms \cite{bauschke_2017_convex,ryu_primer_2016, ryu2021large}. Widely used splitting methods include forward-backward splitting (FBS) \cite{bruck1977weak, passty1979ergodic}, Douglas--Rachford splitting (DRS) \cite{peaceman1955numerical, douglas1956numerical, lions1979splitting}, and alternating directions method of multipliers (ADMM) \cite{gabay1976dual}.
The Davis--Yin splitting (DYS) \cite{davis_three-operator_2017} applies to finding a zero for the sum of three monotone operators and unifies the prior two-operator splitting methods FBS and DRS. The DYS splitting method has a variety of applications \cite{yan2018new, wang2017three, carrillo2022primal, van2020three, weylandt2019splitting, heaton2021learn} and many variants, including stochastic DYS \cite{yurtsever_three_2021, yurtsever2016stochastic, cevher2018stochastic, yurtsever_three_2021-1, PedregosaFatrasCasotto2019_proximal}, inexact DYS \cite{zong2018convergence}, adaptive DYS  \cite{pedregosa_adaptive_2018}, inertial DYS \cite{Cui2019AnIT}, and primal-dual DYS \cite{salim2022dualize} have been proposed and studied.

However, while there has been a relatively large body of research on the various applications and variants of DYS and their (sublinear) convergence,  there is not much literature on linear convergence analysis of the DYS iteration. Among the few prior work, one approach formulates SDPs that numerically computed the tight contraction factors of DYS: Ryu, Taylor, Bergeling, and Giselsson \cite{ryu_operator_2020} and Wang, Fazlyab, Chen, and Preciado \cite{wang_robust_2019} carried out this approach using the performance estimation problem (PEP) and integral quadratic constraint (IQC), respectively. However, this approach does not lead to an analytical expression of the contraction factors. There are only a handful of prior works providing analytical expression of the contraction factors for DYS. The work by Davis and Yin \cite{davis_three-operator_2017}, and Condat and Richt{\'a}rik \cite{CondatRichtarik2022_randprox} obtain analytical contraction factors through standard analyses.  One more work by Lee, Yi, and Ryu \cite{LeeYiRyu2022_convergencea} takes a different approach and uses the machinery of scaled relative graphs.
	
	This novel tool, the scaled relative graphs (SRG) \cite{ryu_scaled_2021}, provides a new approach to analyzing the behavior of multi-valued (non-linear) operators by mapping their action onto the (extended) complex plane. This theory was further studied and utilized by Huang, Ryu, and Yin \cite{huang_scaled_2019}, who identified the SRG of normal matrices; Pates, who leveraged the Toeplitz--Hausdorff theorem to identify SRGs of linear operators \cite{pates_scaled_2021}; and Huang, Ryu, and Yin, who used the SRG to prove the tightness of Ogura and Yamada's  \cite{ogura_non-strictly_2002} averagedness coefficients of the composition of averaged operators.	Moreover, the SRG has been utilized in control theory by Chaffey, Forni, and Rodolphe to examine input-output properties of feedback systems \cite{ChaffeyForniSepulchre2023_graphical,chaffey_scaled_2021}, and Chaffey and Sepulchre have further found its application to characterize behaviors of a given model by leveraging it as an experimental tool \cite{ChaffeySepulchre2024_monotone,chaffey_rolled-off_2021,ChaffeyPadoan2022_circuit}.

	\subsection{Preliminaries}

	\paragraph{Multi-valued operators} 
    In general, we follow notations regarding multi-valued operators presented in \cite{bauschke_2017_convex,ryu2021large}.
	Write $\cH$ for a real Hilbert space with inner product $\dotp{\cdot, \cdot}$ and norm $\norm{\cdot}$. 
    To represent that $\opA$ is a multi-valued operator defined on $\cH$, write $\opA \colon \cH \rightrightarrows \cH$, and define its domain as $\dom\opA = \left\{ x \in \cH \,|\, \opA x \neq \emptyset \right\}$.
	We say $\opA$ is single-valued if all outputs of $\opA$ are singletons or the empty set, and identify $\opA$ with the function from $\dom\opA$ to $\cH$.
	Define the graph of an operator $\opA$ as
	\[
		\mathrm{graph}(\opA) = \{(x, u) \in \cH \times \cH \,|\, u \in \opA x\}.
	\]
	We do not distinguish $\opA$ and $\mathrm{graph}(\opA)$ for the sake of notational simplicity. 
	For instance, it is valid to write $(x,u) \in \opA$ to mean $u \in \opA x$.
	Define the inverse of $\opA$ as
	\[
		\opA ^{-1} = \{(u, x) \,|\, (x,u) \in \opA\},
	\]
	scalar multiplication with an operator as
	\[
		\alpha\opA  = \{(x,\alpha u) \,|\, (x, u) \in \opA\},
	\]
	the identity operator as
	\[
	    \opI = \{ (x, x) \,|\, x \in \cH \},
	\]
	and 
	\[
		\opI + \alpha\opA = \{ (x, x + \alpha u) \,|\, (x, u) \in \opA \}
	\]
	for any $\alpha\in\reals$.
	Define the resolvent of $\opA$ with stepsize $\alpha>0$ as 
	\[
	    \opJ_{\alpha \opA} = (\opI +\alpha \opA)^{-1}.
	\]
	Note that $\opJ_{\alpha \opA}$ is a single-valued operator if $\opA$ is monotone, or equivalently if $\dotp{x - y, u - v} \ge 0$ for all $(x,u)$, $(y,v) \in \opA$.
	Define addition and composition of operators $\opA \colon \cH \rightrightarrows \cH$ and $\opB \colon \cH \rightrightarrows \cH$ as
	\begin{align*}
		\opA + \opB &= \left\{ (x, u + v) \,|\, (x, u) \in \opA, (x, v) \in \opB \right\},\\
		\opA \opB &= \left\{ (x, s) \,|\, \exists \, u \; \text{such that} \; (x, u) \in \opB, (u, s) \in \opA \right\}.
	\end{align*}

	We call $\cA$ a class of operators if it is a set of operators.
	For any real scalar $\alpha\in \reals$, define
        \[
            \alpha \cA = \{ \alpha \opA \,|\, \opA \in \cA\}
        \]
	and \[
	   \opI + \alpha\cA = \{ \opI + \alpha\opA \,|\, \opA \in \cA\}.
	\]
	 Define 
	\[
		\cA^{-1} = \{\opA^{-1} \,|\, \opA \in \cA \}
	\]
	and $\opJ_{\alpha\cA} = (\opI + \alpha\cA)^{-1}$ for $\alpha > 0$.

    \paragraph{Subdifferential operators}
    Unless otherwise stated, functions defined on $\cH$ are extended real-valued, which means
	\[
        f \colon \cH \to \R \cup \left\{ \pm \infty \right\}.
    \]
	For a function $f$, we define the subdifferential operator $\partial f$ via
	\[
	    \partial f(x) = \left\{ g \in \cH \,|\, f(y) \ge f(x) + \dotp{g, y - x}, \forall y \in \cH \right\}
	\]
	(we allow $\infty \ge \infty$ and $-\infty \ge -\infty$). 
	In some cases, the subdifferential operator $\partial f$ is a single-valued operator. 
	Then, we write $\nabla f = \partial f$.

    \paragraph{Proximal operators}
    We call $f$ a CCP function if it is convex, closed, and proper \cite{ryu2021large,bauschke_2017_convex}.
    For a CCP function $f \colon \cH \to \R \cup \left\{ \pm \infty \right\}$ and $\alpha > 0$, we define the proximal operator with respect to $\alpha f$ as 
    \[
        \prox_{\alpha f}(x) = \argmin_{y \in \cH} \left\{ \alpha f(y) + \frac{1}{2} \norm{x - y}^2 \right\}.
    \]
    Then, $\resa{\partial f} = \prox_{\alpha f}$.
    
	\paragraph{Class of functions and subdifferential operators}
        Define $f \colon \cH \to \R \cup \left\{ \pm \infty \right\}$ being $\mu$-strongly convex (for $\mu \in (0, \infty)$) and $L$-smooth (for $L \in (0, \infty)$) as they are defined in \cite{Nesterov2014_introductory}.
	Write
	\[
		\cF_{\mu, L} = \left\{ f \,|\, f \text{ is } \mu \text{-strongly convex, } L \text{-smooth, and CCP.} \right\}.
	\]
	for collection of functions that are $\mu$-strongly convex and $L$-smooth at the same time.
	For notational simplicity, we extend $\cF_{\mu, L}$ to allow $\mu = 0$ or $L = \infty$ by defining
	\begin{align*}
		\cF_{0, L} &= \left\{ f \,|\, f \text{ is } L \text{-smooth and CCP.} \right\},\\
		\cF_{\mu, \infty} &= \left\{ f \,|\, f \text{ is } \mu \text{-strongly convex and CCP.} \right\},\\
		\cF_{0, \infty} &= \left\{ f \,|\, f \text{ is CCP.} \right\}.
	\end{align*}
	for $\mu$, $L \in (0, \infty)$.

	Subdifferential operators of any functions in $\cF_{\mu, L}$ are denoted 
	\[
		\partial \cF_{\mu, L} = \left\{ 
		\partial f \,|\, f \in \cF_{\mu, L} \right\}.
	\]
	
	\paragraph{Complex set notations}
	Denote $\overline{\complex} = \complex \cup \left\{ \infty \right\}$, and define $0^{-1} = \infty$ and $\infty^{-1} = 0$ in $\overline{\complex}$. 
	For $A \subset \overline{\complex}$ and $\alpha \in \complex$, define
    \[
        \alpha A = \left\{ \alpha z \,|\, z \in A \right\}, 
        \quad \alpha + A = \left\{ \alpha + z \,|\, z \in A \right\},
        \quad A^{-1} = \left\{ z^{-1} \,|\, z \in A \right\}.
    \]
	For $A \subseteq \complex$, define the boundary of $A$ 
	\[
        \partial A = \overline{A}\setminus \mathrm{int} A.
    \]
	We clarify that the usage of $\partial$ operator is different  when it is applied to a function or a complex set; the former is the subdifferential operator, and the latter is the boundary operator.
	For circles and disks on the complex plane, write
	\[
	    \Ci(z, r) = \{ w \in \complex \,|\, \abs{w - z} = r\},
	    \qquad
	    \Di(z, r) = \{ w \in \complex \,|\, \abs{w - z} \le r \}
	\]
	for $z \in \complex$ and $r \in (0, \infty)$. 
	Note relationship that $\Ci(z, r) = \partial \Di(z, r)$.
	In this paper, the $z$ in $\Ci(z, r) $ are real numbers without a complex part.

	\paragraph{Scaled relative graphs \cite{ryu_scaled_2021}} 

	Define the SRG of an operator $\opA\colon\cH\rightrightarrows\cH$ as
	\begin{align*}
		\mathcal{G}(\opA)&=
		\left\{
		\frac{\|u-v\|}{\|x-y\|}
		\exp\left[\pm i \angle (u-v,x-y)\right]
		\,\Big|\,
		u\in \opA x,\,v\in \opA y,\, x\ne y \right\}\\
		&\qquad\qquad\qquad\qquad\qquad\qquad\qquad
		\bigg(\cup \{\infty\}\text{ if $\opA$ is not single-valued}\bigg).
	\end{align*}
	where the angle between $x \in \cH$ and $y \in \cH$ is defined as
	\begin{align*}
		\angle(x,y) =
		\left\{
		\begin{array}{ll}
		\arccos\para{\tfrac{\dotp{x,y}}{\norm{x}\norm{y}}}&\text{ if }x\ne 0,\,y\ne 0\\
		0&\text{ otherwise.}
		\end{array}
		\right.
	\end{align*}
        Note, SRG is a subset of $\overline{\complex}$.
	Define the SRG of a class of operators $\cA$ as
	\[
	\mathcal{G}(\cA)=\bigcup_{\opA\in \cA}\mathcal{G}(\opA).
	\]
	We say $\cA$ is \emph{SRG-full} if
	\begin{align*}
	\opA\in \cA
	\quad\Leftrightarrow\quad
	\cG(\opA)\subseteq\cG(\cA),
	\end{align*}
	which essentially means that the membership in an SRG-full class is entirely characterized by its SRG, providing one-to-one correspondence between geometric operations in the language of SRGs and operator algebra.
	The following fact states that SRG-fullness is invariant under common operations on operators.

    \begin{fact}[Theorem 4, 5 \cite{ryu_scaled_2021}]
        \label{fact:srg-trans}
             If $\cA$ is a class of operators, then 
        \[
        \cG(\alpha \cA)=\alpha\cG(\cA), \quad
        \cG(\opI+ \cA)=1+\cG(\cA), \quad
        \cG(\cA^{-1}) = \cG(\cA)^{-1}.
        \]
        where $\alpha$ is a nonzero real number.
        If $\cA$ is furthermore SRG-full, then $\alpha \cA, \opI+ \cA$, and $\cA^{-1}$
        are SRG-full.
    \end{fact}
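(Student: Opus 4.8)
The plan is to prove each identity first for a single operator and a single pair of graph points, and then assemble the full statement by taking unions, exploiting that $\cG$ is by definition a union over admissible pairs and, for classes, over operators. Fix $\opA$ together with $(x,u),(y,v)\in\opA$ satisfying $x\neq y$, and abbreviate $a=x-y\neq0$ and $b=u-v$. The contribution of this pair to $\cG(\opA)$ is the conjugate-symmetric set $\sigma(a,b):=\{\tfrac{\norm{b}}{\norm{a}}e^{\pm i\angle(b,a)}\}$, which has one or two elements. Each rule will be read off from the effect on $\sigma$ of replacing $b$ by $\alpha b$, replacing $b$ by $a+b$, and interchanging $a$ and $b$; these are exactly the pair-level actions of $\alpha\opA$, $\opI+\opA$, and $\opA^{-1}$.

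The geometric core is a planar reduction. Let $V=\Span\{a,b\}$, of dimension at most two, and fix a linear isometry $\Phi\colon V\to\complex$ with $\Phi(a)=\norm{a}\in\reals_{>0}$. Since $\Phi$ preserves norms and inner products, $\abs{\Phi(b)}=\norm{b}$ and $\arg\Phi(b)=\pm\angle(a,b)$, the sign being determined by the orientation of $\Phi$; letting $\Phi$ range over both orientations realizes both signs. Hence
\[
  \sigma(a,b)=\Big\{\tfrac{\Phi(b)}{\Phi(a)},\ \overline{\tfrac{\Phi(b)}{\Phi(a)}}\Big\}=\{\,b/a,\ \overline{b/a}\,\}\subseteq\overline{\complex},
\]
so a pair's SRG value is simply the complex quotient $b/a$ and its conjugate. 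I expect this isometric/orientation bookkeeping to be the main obstacle: one must check that such $\Phi$ exists, that both orientations are attainable, and that the degenerate cases ($b=0$, or $b$ parallel to $a$) collapse $\sigma$ to a single real point consistently.

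Granting the reduction, the three identities reduce to elementary complex arithmetic, using that conjugation commutes with $z\mapsto\alpha z$ (real $\alpha$), $z\mapsto1+z$, and $z\mapsto z^{-1}$, so each map acts uniformly on a conjugate pair. The pair $(x,\alpha u),(y,\alpha v)$ has output-difference $\alpha b$, hence value $\{\alpha b/a,\overline{\alpha b/a}\}=\alpha\,\sigma(a,b)$; the pair $(x,x+u),(y,y+v)$ has output-difference $a+b$, hence value $1+\sigma(a,b)$; and the reversed pair $(u,x),(v,y)$ has value $\sigma(b,a)=\{a/b,\overline{a/b}\}=\sigma(a,b)^{-1}$ in $\overline{\complex}$. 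Unioning over admissible pairs gives the operator-level identities $\cG(\alpha\opA)=\alpha\cG(\opA)$, $\cG(\opI+\opA)=1+\cG(\opA)$, and $\cG(\opA^{-1})=\cG(\opA)^{-1}$; the $\{\infty\}$ flags are reconciled by noting that $\alpha\opA$ (for $\alpha\neq0$) and $\opI+\opA$ are single-valued exactly when $\opA$ is, whereas for the inverse the $0\leftrightarrow\infty$ convention of $\overline{\complex}$ matches the non-single-valuedness of $\opA^{-1}$ against the zero-valued pairs of $\opA$ and vice versa. Since $\opA\mapsto\alpha\opA$, $\opA\mapsto\opI+\opA$, $\opA\mapsto\opA^{-1}$ are bijections of the class, a further union over $\opA\in\cA$ yields the class-level identities.

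For the preservation of SRG-fullness, suppose $\cA$ is SRG-full; I argue for $\alpha\cA$, the other two cases being identical in form. The implication $\opB\in\alpha\cA\Rightarrow\cG(\opB)\subseteq\cG(\alpha\cA)$ is immediate from the definition of $\cG$ on a class. For the converse, let $\cG(\opB)\subseteq\cG(\alpha\cA)=\alpha\cG(\cA)$ and set $\opB'=\alpha^{-1}\opB$, which is legitimate because $\alpha\neq0$. By the scaling identity, $\cG(\opB')=\alpha^{-1}\cG(\opB)\subseteq\alpha^{-1}\alpha\cG(\cA)=\cG(\cA)$, so SRG-fullness of $\cA$ gives $\opB'\in\cA$ and therefore $\opB=\alpha\opB'\in\alpha\cA$. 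The same template applies with $\opB'=\opB-\opI$ for $\opI+\cA$ (using $\cG(\opB-\opI)=\cG(\opB)-1$) and with $\opB'=\opB^{-1}$ for $\cA^{-1}$ (using $(\cG(\cA)^{-1})^{-1}=\cG(\cA)$ in $\overline{\complex}$).
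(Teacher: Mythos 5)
Your proof is correct. Note that this paper never proves the statement itself---it is imported as a Fact citing Theorems 4 and 5 of \cite{ryu_scaled_2021}---and your argument essentially reconstructs the mechanism of that source: reduce each pair's SRG contribution to a complex quotient via a planar isometry (both orientations yielding the conjugate pair), observe that $\opA \mapsto \alpha\opA$, $\opA \mapsto \opI + \opA$, $\opA \mapsto \opA^{-1}$ act on these quotients as $z \mapsto \alpha z$, $z \mapsto 1+z$, $z \mapsto z^{-1}$ with the $0 \leftrightarrow \infty$ bookkeeping for inversion, and transfer SRG-fullness by the inverse-substitution trick ($\opB' = \alpha^{-1}\opB$, $\opB - \opI$, $\opB^{-1}$).
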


	\begin{fact}[Proposition 2 \cite{ryu_scaled_2021}]
		\label{fact:cvx-srg-first}
		Let $0<\mu<L<\infty$. Then
		\vspace{0.05in}
		\begin{center}
		\begin{tabular}{cc}
        \multicolumn{1}{l}{$\cG(\partial \mathcal{F}_{0,\infty}) =\left\{z\,|\,\Re z\ge 0\right\}\cup\{\infty\}$}
        &
        \multicolumn{1}{l}{
        $\cG(\partial \mathcal{F}_{\mu,\infty})=\left\{z\,|\,\Re z\ge \mu\right\}\cup\{\infty\}$}
        \\
		\raisebox{-0.5\height}{
		\begin{tikzpicture}[scale=1, baseline=(current bounding box.center)]
		\fill[fill=medgrey] (0,-1) rectangle (1.4,1);
		\draw [<->] (-0.5,0) -- (1.4,0);
		\draw [<->] (0,-1) -- (0,1);
		\draw (0.95,.8) node {$\scriptstyle\cup \{\binfty\}$};
		\end{tikzpicture}}
		&
		\raisebox{-.5\height}{
		\begin{tikzpicture}[scale=1, baseline=(current bounding box.center)]
		\draw [<->] (0,-1) -- (0,1);
		\fill[fill=medgrey] (0.2,-1) rectangle (1.4,1);
		\draw [<->] (-0.5,0) -- (1.4,0);
		\draw (0.95,.8) node {$\scriptstyle \cup \{\binfty\}$};
		\draw (0.2,0pt) node [above right] {$\mu$};
		\filldraw (.2,0) circle ({0.6*1.5/1pt});
		\end{tikzpicture}}
        \\[2cm]
        \multicolumn{1}{l}{
        $\cG(\partial \mathcal{F}_{0,L})=\Di(L/2, L/2)$}
        &
        \multicolumn{1}{l}{
        $\cG(\partial \mathcal{F}_{\mu,L})=\Di((L + \mu)/2, (L - \mu)/2)$}
        \\
		\raisebox{-.5\height}{
		\begin{tikzpicture}[scale=1, baseline=(current bounding box.center)]
		\fill[fill=medgrey] (0.5,0) circle (0.5);
		\draw [<->] (0,-1.0) -- (0,1.0);
		\draw (0.9,0) node [above right] {$L$};
		\filldraw (1,0) circle ({0.6*1.5/1pt});
		\draw [<->] (-0.5,0) -- (1.4,0);
		\end{tikzpicture}}
		&
		\raisebox{-.5\height}{
		\begin{tikzpicture}[scale=1, baseline=(current bounding box.center)]
		\draw [<->] (0,-1.0) -- (0,1.0);
		\fill[fill=medgrey] (0.65,0) circle (0.35);
		\draw [<->] (-0.5,0) -- (1.4,0);
		\filldraw (1,0) circle ({0.6*1.5/1pt});
		\draw (0.9,0pt) node [above right] {$L$};
		\filldraw (.3,0) circle ({0.6*1.5/1pt});
		\draw (0.45,0) node [above left] {$\mu$};
		\end{tikzpicture}
		}
		\end{tabular}
		\end{center}
		\vspace{0.05in}
	\end{fact}

	\paragraph{DYS operators}
	Let
	\begin{align*}
		\opT_{\opA, \opB, \opC, \alpha, \lambda} 
		&=
		\opI - \lambda \resa{\opB} + \lambda \resa{\opA} (2\resa{\opB} - \opI - \alpha \opC \resa{\opB}) 
	\end{align*}
	be the DYS operator for operators $\opA\colon \cH\rightrightarrows\cH$, $\opB\colon \cH\rightrightarrows\cH$, and $\opC\colon \cH\rightrightarrows\cH$ 
	with stepsize $\alpha \in (0, \infty)$ and averaging parameter $\lambda \in (0, \infty)$.
	In this paper, we usually take $\opA = \partial f$, $\opB = \partial g$, and $\opC = \nabla h$ for some CCP functions $f$, $g$, and $h$ defined on $\cH$, to obtain
	\[
		\opT_{\partial f, \partial g, \nabla h, \alpha, \lambda}
		= \opI - \lambda \proxa{g} + \lambda \proxa{f} (2\proxa{g} - \opI - \alpha \nabla h \proxa{g})
	\]
	what we call the \emph{subdifferential DYS operator}.
	
	Let 
	\[
		\opT_{\cA, \cB, \cC, \alpha, \lambda} = \left\{ 
		\opT_{\opA, \opB, \opC, \alpha, \lambda} \, |\, \opA \in \cA, \opB \in \cB, \opC \in \cC
			\right\}
	\]
	 be the class of DYS operators for operator classes $\cA$, $\cB$, and $\cC$ with $\alpha, \lambda \in (0, \infty)$.
	Define
	\begin{align*}
		\zeta_{\text{DYS}}(z_A, z_B, z_C; \alpha, \lambda) 
		&= 1 - \lambda z_B + \lambda z_A (2z_B - 1 - \alpha z_C z_B) \\
		&= 1 - \lambda z_A - \lambda z_B + \lambda (2 - \alpha z_C) z_A z_B,
	\end{align*}
	which exhibits symmetry with respect to $z_A$ and $z_B$, and
	\[
		\cZ^{\text{DYS}}_{\cA, \cB, \cC, \alpha, \lambda} = 
		\left\{ \zeta_{\text{DYS}}(z_A, z_B, z_C; \alpha, \lambda)
		\,|\, z_A \in \srg{\resa{\cA}}, z_B \in \srg{\resa{\cB}}, z_C \in \srg{\cC}
		\right\}
	\]
	for operator classes $\cA$, $\cB$, and $\cC$ with $\alpha, \lambda \in (0, \infty)$.

	\paragraph{Identifying the tight Lipschitz coefficient via SRG}
    We say a subset of $\overline{\complex}$ is a \emph{generalized disk} if it is a disk, $\{ z \,|\, \Re z \ge a \} \cup \{\infty\}$, or $\{ z \,|\, \Re z \le a \} \cup \{\infty\}$ for a real number $a$.
    The following is the key fact for calculating the Lipschitz coefficients of the DYS operators via SRG. 
    
	\begin{fact}[Corollary~2.2 of \cite{LeeYiRyu2022_convergencea}]\label{fact:tight-coeff-dys}
		Let $\alpha, \lambda > 0$. 
		Let $\cA$ and $\cB$ be SRG-full classes of monotone operators where $\srg{\opI + \alpha\cA}$ forms a generalized disk.
		Let $\cC$ be an SRG-full class of single-valued operators with $\srg{\cC}$ being a generalized disk.
		Assume $\srg{\cA}$, $\srg{\cB}$, and $\srg{\cC}$ are nonempty.
		Then,
		\[
			\sup_{\substack{\opT \in \opT_{\cA, \cB, \cC, \alpha, \lambda} \\ x, y \in \dom\opT, x \ne y}}
			\frac{\norm{\opT x - \opT y}}{\norm{x - y}}
			= \sup_{z \in \cZ^{\text{DYS}}_{\cA, \cB, \cC, \alpha, \lambda}} \abs{z}.
		\]
	\end{fact}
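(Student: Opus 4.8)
The plan is to prove the two matching inequalities separately, using the scaled relative graph as the bridge between the operator-theoretic left-hand side and the scalar right-hand side. The starting observation is that, directly from the definition of $\cG$, the modulus of every point of $\cG(\opA)$ is a ratio $\norm{\opA x-\opA y}/\norm{x-y}$; hence for any class of operators $\cD$ one has $\sup_{\opT\in\cD,\,x\ne y}\norm{\opT x-\opT y}/\norm{x-y}=\sup_{z\in\cG(\cD)}\abs{z}$, where the union defining $\cG(\cD)$ ranges over the members of the class. Applying this with $\cD=\opT_{\cA,\cB,\cC,\alpha,\lambda}$ reduces the claim to comparing the two sets $\cG(\opT_{\cA,\cB,\cC,\alpha,\lambda})$ and $\cZ^{\text{DYS}}_{\cA,\cB,\cC,\alpha,\lambda}$; I would in fact aim for the stronger statement that they coincide up to closure, which yields equality of the supremal moduli.

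For the lower bound $\ge$, I would fix arbitrary $z_A\in\cG(\resa{\cA})$, $z_B\in\cG(\resa{\cB})$, and $z_C\in\cG(\cC)$ and construct one instance of DYS attaining the value $\abs{\zeta_{\text{DYS}}(z_A,z_B,z_C;\alpha,\lambda)}$. Since $\cA$, $\cB$, $\cC$ are SRG-full, each prescribed SRG value is realized by some operator in the class; the task is to stitch these realizations together consistently along the DYS chain $x\mapsto\resa{\opB}x\mapsto\opC\resa{\opB}x\mapsto\resa{\opA}(2\resa{\opB}-\opI-\alpha\opC\resa{\opB})x$. The clean way to do this is to build $\opA\in\cA$, $\opB\in\cB$, $\opC\in\cC$ all preserving a common two-dimensional plane $P\cong\CC$ and acting on $P$ as multiplication by complex scalars whose associated SRG values are $z_A$, $z_B$, $z_C$. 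The entire computation then stays in $P$ and acts as multiplication by $\zeta_{\text{DYS}}(z_A,z_B,z_C;\alpha,\lambda)$, so the ratio on a suitable pair $x\ne y\in P$ equals $\abs{\zeta_{\text{DYS}}}$, giving $\text{LHS}\ge\sup_{z}\abs{z}=\text{RHS}$.

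For the upper bound $\le$, the idea is to track how the SRG propagates through the algebraic construction of $\opT$, peeling off one operation at a time with Fact~\ref{fact:srg-trans}. Writing $\opT=\opI-\lambda\resa{\opB}+\lambda\,\resa{\opA}\,\opM$ with $\opM=2\resa{\opB}-\opI-\alpha\opC\resa{\opB}$, the affine operations (scaling, adding $\opI$, and the inversions inside the resolvents) are handled tightly by Fact~\ref{fact:srg-trans}; in particular $\cG(\resa{\cA})=\cG(\opI+\alpha\cA)^{-1}$, and since $\cG(\opI+\alpha\cA)$ is by hypothesis a generalized disk bounded away from $0$ (monotonicity forces $\cG(\cA)\subseteq\{\Re z\ge 0\}$, so $\cG(\opI+\alpha\cA)\subseteq\{\Re z\ge 1\}$), its reciprocal $\cG(\resa{\cA})$ is itself a disk. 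The one genuinely nontrivial step is the composition $\resa{\opA}\,\opM$, because SRG composition yields only an inclusion in general and slack in the angle bookkeeping can destroy tightness.

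The hard part will be showing that this composition is captured \emph{without} slack, and this is exactly where the generalized-disk hypothesis on $\cG(\opI+\alpha\cA)$ enters: a disk symmetric about the real axis is invariant under the sign ambiguity $\pm\angle$ inherent in the SRG, so for any prescribed outer factor $z_A\in\cG(\resa{\cA})$ one can always align a two-dimensional realization so that the product value $z_A\cdot z_M$ is genuinely attained rather than merely bounded. Establishing this matching lemma for the disk factor, and then folding in the sum rule for the remaining $-\lambda\resa{\opB}$ and $\opI$ terms via Fact~\ref{fact:srg-trans}, gives $\cG(\opT_{\cA,\cB,\cC,\alpha,\lambda})\subseteq\overline{\cZ^{\text{DYS}}_{\cA,\cB,\cC,\alpha,\lambda}}$. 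Combined with the lower bound this yields $\sup_{z\in\cG(\opT_{\cA,\cB,\cC,\alpha,\lambda})}\abs{z}=\sup_{z\in\cZ^{\text{DYS}}_{\cA,\cB,\cC,\alpha,\lambda}}\abs{z}$, which is the asserted equality.
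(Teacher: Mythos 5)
This paper never proves Fact~\ref{fact:tight-coeff-dys}; it is imported verbatim as Corollary~1 of \cite{LeeYiRyu2022_convergencea}, so your proposal has to be measured against the proof given there. Your first two steps are fine and coincide with the easy half of that proof: the identity between the left-hand side and $\sup_{z\in\cG(\opT_{\cA,\cB,\cC,\alpha,\lambda})}\abs{z}$ is immediate from the definition of the SRG, and your lower bound via operators acting as complex multiplications on a common two-dimensional subspace (admitted into $\cA$, $\cB$, $\cC$ by SRG-fullness, since such an operator's SRG is $\{z,\conj{z}\}$) is exactly the tightness construction of the cited work.

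The upper bound is where your proposal breaks, and the flaw is structural rather than a missing technical lemma. First, Fact~\ref{fact:srg-trans} contains no rule for sums of two distinct operator classes or for compositions; those are separate theorems of \cite{ryu_scaled_2021}, they require chord/arc hypotheses, and, crucially, they yield inclusions in which the operand classes vary \emph{independently}. Consequently, any argument that peels off one operation at a time necessarily decouples the three occurrences of $\resa{\opB}$ in $\opT = \opI - \lambda\resa{\opB} + \lambda\resa{\opA}(2\resa{\opB}-\opI-\alpha\opC\resa{\opB})$, so the best it can deliver is a bound by
\[
\sup\,\abs{1-\lambda z_{B,1}+\lambda z_A\left(2z_{B,2}-1-\alpha z_C z_{B,3}\right)},
\]
with $z_{B,1},z_{B,2},z_{B,3}$ ranging over $\srg{\resa{\cB}}$ independently. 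That supremum is strictly larger than $\sup_{z\in\cZ^{\text{DYS}}_{\cA,\cB,\cC,\alpha,\lambda}}\abs{z}$ in general. Concretely, take $\lambda=1$, $\cA=\cB$ the SRG-full class of all monotone operators (so $\srg{\resa{\cA}}=\srg{\resa{\cB}}=\Di(1/2,1/2)$ and $\srg{\opI+\alpha\cA}$ is a generalized disk), and $\cC$ the class of constant operators ($\srg{\cC}=\{0\}$). Then $\opT$ is the Douglas--Rachford operator, which is nonexpansive, and indeed $\sup_{z\in\cZ^{\text{DYS}}}\abs{z}=1$ (writing $z_A=\tfrac{1+a}{2}$, $z_B=\tfrac{1+b}{2}$ with $\abs{a},\abs{b}\le 1$ gives $\zeta_{\text{DYS}}=\tfrac{1+ab}{2}$), whereas the decoupled supremum is at least $2$, attained at $z_{B,1}=0$, $z_{B,2}=1$, $z_A=1$. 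So no refinement of your matching lemma for the single composition $\resa{\opA}\opM$ can close the argument: the set you would prove containment in is simply too big, and you would establish only an inequality with a strictly larger right-hand side, not the stated equality.

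For contrast, the proof in \cite{LeeYiRyu2022_convergencea} avoids the operation-by-operation calculus in the hard direction: it fixes $\opA\in\cA$, $\opB\in\cB$, $\opC\in\cC$ and $x\ne y$, extracts the complex representatives $z_B$ and $z_C$ from the actual difference vectors $x-y$, $\resa{\opB}x-\resa{\opB}y$, $\opC\resa{\opB}x-\opC\resa{\opB}y$ --- so all occurrences of $\resa{\opB}$ share one $z_B$ by construction --- and then runs a dedicated planar geometric estimate in which the generalized-disk hypothesis on $\srg{\opI+\alpha\cA}$ (equivalently on $\srg{\resa{\cA}}$, the variable in which $\zeta_{\text{DYS}}$ is affine) absorbs the angle misalignment among the generally non-coplanar difference vectors. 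Your intuition that the disk's symmetry about the real axis neutralizes the $\pm$ sign ambiguity is indeed the right mechanism, but it has to operate inside that global, shared-variable argument, not as a patch on a composition rule.
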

    In fact, the original version of Fact~\ref{fact:tight-coeff-dys} allows $\srg{\opI + \alpha\cA}$ to have a more general property, namely the so-called ``arc property.''
    We can calculate bounds for the right-hand-side of the equality in Fact~\ref{fact:tight-coeff-dys} efficiently by using the following fact.
	\begin{fact}[Lemma~1.1 of \cite{LeeYiRyu2022_convergencea}]\label{fact:max-mod}
		Let $f \colon \complex^3 \to \complex$ be a polynomial of three complex variables.
		Let $A$, $B$, and $C$ be compact subsets of $\,\complex$.
		Then, 
		\[
			\max_{\substack{z_A \in A , z_B \in B ,\\ z_C \in C}} \abs{f(z_A, z_B, z_C)}
			=
			\max_{\substack{z_A \in \partial A , z_B \in \partial B ,\\ z_C \in \partial C}} \abs{f(z_A, z_B, z_C)}.
		\]
	\end{fact}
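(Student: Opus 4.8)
The plan is to reduce the three-variable statement to repeated application of the classical one-variable maximum modulus principle, peeling off one coordinate at a time. The core building block I would establish first is the following single-variable claim: for any polynomial $g \colon \complex \to \complex$ and any nonempty compact set $K \subset \complex$, one has $\max_{z \in K} \abs{g(z)} = \max_{z \in \partial K} \abs{g(z)}$. Both maxima exist by compactness and continuity, and since $\partial K \subseteq K$ the inequality $\geq$ is immediate; the content is the reverse inequality.

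To prove the single-variable claim, let $z_0 \in K$ attain $\max_K \abs{g}$. If $z_0 \in \partial K$ we are done, so suppose $z_0 \in \mathrm{int}\,K$ and let $\Omega$ be the connected component of $\mathrm{int}\,K$ containing $z_0$. Then $z_0$ is an interior local maximum of $\abs{g}$, so the maximum modulus principle forces $g$ to be constant on $\Omega$, equal to $g(z_0)$. Since $K$ is compact, $\Omega$ is a nonempty bounded open set and hence has nonempty boundary $\partial\Omega$. A short topological argument shows $\partial\Omega \subseteq \partial K$: any $w \in \partial\Omega$ lies in $\overline{\Omega}\subseteq K$ but cannot lie in $\mathrm{int}\,K$, for otherwise its component of $\mathrm{int}\,K$ would be an open neighborhood of $w$ meeting $\Omega$ and hence would coincide with $\Omega$, contradicting $w \notin \Omega$. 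By continuity $g \equiv g(z_0)$ on $\overline{\Omega}$, so $\abs{g(w)} = \abs{g(z_0)} = \max_K \abs{g}$ for any $w \in \partial\Omega \subseteq \partial K$, establishing the claim.

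With the single-variable claim in hand, I would finish by iterating over the three coordinates. First, factoring the joint maximum as $\max_{(z_A,z_B,z_C)\in A\times B\times C}\abs{f} = \max_{(z_B,z_C)\in B\times C}\bigl(\max_{z_A\in A}\abs{f(z_A,z_B,z_C)}\bigr)$ and applying the claim to the polynomial $z_A \mapsto f(z_A,z_B,z_C)$ for each fixed $(z_B,z_C)$ replaces $A$ by $\partial A$. Repeating this with $B$ (noting $\partial A$ is again compact, and nonempty since a nonempty bounded subset of the connected space $\complex$ cannot be clopen) replaces $B$ by $\partial B$, and a third application replaces $C$ by $\partial C$, yielding the desired identity.

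The main obstacle I anticipate is the single-variable claim for an arbitrary compact $K$, which is not literally the textbook maximum modulus principle (that is usually stated for the closure of a bounded domain). The subtlety is that $\mathrm{int}\,K$ need not be connected, nor need its boundary equal $\partial K$, so the component-wise argument and the verification $\partial\Omega \subseteq \partial K$ must be carried out carefully. The degenerate case in which $g$ is constant in the relevant variable is automatically covered, since then every point of $K$, in particular a boundary point, attains the maximum.
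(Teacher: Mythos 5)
Your proof is correct and is essentially the same argument as the source: this paper does not prove the statement itself but cites it as Lemma~1 of \cite{LeeYiRyu2022_convergencea}, whose proof is precisely the coordinate-wise reduction you perform, fixing two variables and applying the one-variable maximum modulus principle to the remaining one. Your write-up additionally nails down the topological details for an arbitrary compact set $K$ (passing to a connected component $\Omega$ of $\mathrm{int}\,K$, checking $\partial\Omega \neq \emptyset$ and $\partial\Omega \subseteq \partial K$, and noting $\partial A$ is nonempty and compact so the iteration can continue), which is a genuine point of care rather than a deviation in approach.
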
	
	
	\section{Contraction factors of DYS for convex optimization problems}
	We now present Lipschitz factors of DYS for convex optimization problems. When the Lipschitz factor is strictly less than $1$, we, of course, have a strict contraction.

To the best of our knowledge, the convergence rates provided by our Theorems~\ref{thm:main-result-1} and \ref{thm:main-result-2} are the best linear convergence rates in the sense that they are not slower than the prior rates in all cases and faster in most cases. We provide specific comparisons against prior rates in Section~\ref{ss:comparison}.


    \begin{theorem}\label{thm:main-result-1}
        Let $f \in \cF_{\mu_f, L_f}$, $g \in \cF_{\mu_g, L_g}$, and $h \in \cF_{\mu_h, L_h}$, where 
        \begin{gather*}
            0 \le \mu_f < L_f \le \infty, \quad 0 \le \mu_g < L_g \le \infty, \quad 0 \le \mu_h < L_h < \infty.
        \end{gather*}
        Let $\lambda > 0$ be an averaging parameter and $\alpha > 0$ be a step size.
        Throughout this theorem, define $r / \infty = 0$ for any real number $r$. Write
        \begin{gather*}
			d = \max\{\abs{2 - \lambda - \alpha\mu_h}, \abs{2 - \lambda - \alpha L_h}\}, \\
            C_f = \frac{1}{2}\left( \frac{1}{1 + \alpha \mu_f} + \frac{1}{1 + \alpha L_f} \right), \quad
            C_g = \frac{1}{2}\left( \frac{1}{1 + \alpha \mu_g} + \frac{1}{1 + \alpha L_g} \right), \\
			R_f = \frac{1}{2}\left( \frac{1}{1 + \alpha \mu_f} - \frac{1}{1 + \alpha L_f} \right), \quad
            R_g = \frac{1}{2}\left( \frac{1}{1 + \alpha \mu_g} - \frac{1}{1 + \alpha L_g} \right).
        \end{gather*}
        If $\lambda < 1 / C_f$, then $\opT_{\partial f, \partial g, \nabla h, \alpha, \lambda}$ is $\rho_f$-Lipschitz, where
        \begin{align*}
            &\rho_f^2 = 
				\left( 1 - \lambda \frac{C_f^2 - R_f^2}{C_f} \right) 
				\max\bigg\{ 
                \left( 1 - \frac{\lambda}{1 + \alpha\mu_g} \right)^2 + 
                \frac{\lambda d^2}{1 / C_f - \lambda} \left( \frac{1}{1 + \alpha\mu_g} \right)^2, \\
                &\qquad\qquad\qquad\qquad\qquad\qquad\quad
                \left( 1 - \frac{\lambda}{1 + \alpha L_g} \right)^2 + 
                \frac{\lambda d^2}{1 / C_f - \lambda} \left( \frac{1}{1 + \alpha L_g} \right)^2
            \bigg\}.
        \end{align*}
        Symmetrically, if $\lambda < 1 / C_g$, then $\opT_{\partial f, \partial g, \nabla h, \alpha, \lambda}$ is $\rho_g$-Lipschitz, where
        \begin{align*}
            &\rho_g^2 = 
				\left( 1 - \lambda \frac{C_g^2 - R_g^2}{C_g} \right) 
				\max\bigg\{ 
                \left( 1 - \frac{\lambda}{1 + \alpha\mu_f} \right)^2 + 
                \frac{\lambda d^2}{1 / C_g - \lambda} \left( \frac{1}{1 + \alpha\mu_f} \right)^2, \\
                &\qquad\qquad\qquad\qquad\qquad\qquad\quad
                \left( 1 - \frac{\lambda}{1 + \alpha L_f} \right)^2 + 
                \frac{\lambda d^2}{1 / C_g - \lambda} \left( \frac{1}{1 + \alpha L_f} \right)^2
            \bigg\}.
        \end{align*}
    \end{theorem}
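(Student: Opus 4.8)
The plan is to reduce the Lipschitz bound to a scalar maximization over scaled relative graphs via Fact~\ref{fact:tight-coeff-dys}, and then estimate that maximization in closed form by a weighted Cauchy--Schwarz argument whose weight is tuned so that the contribution of the $f$-resolvent collapses to the stated prefactor. First I would set $\cA = \partial\cF_{\mu_f, L_f}$, $\cB = \partial\cF_{\mu_g, L_g}$, and $\cC = \partial\cF_{\mu_h, L_h}$, the last being single-valued since $L_h < \infty$. Using Fact~\ref{fact:cvx-srg-first} with the transformation rules of Fact~\ref{fact:srg-trans}, $\srg{\opI + \alpha\cA} = 1 + \alpha\srg{\partial\cF_{\mu_f,L_f}}$ is the generalized disk with diameter $[1 + \alpha\mu_f,\, 1 + \alpha L_f]$ on the real axis, so inverting gives $\srg{\resa{\cA}} = \Di(C_f, R_f)$; symmetrically $\srg{\resa{\cB}} = \Di(C_g, R_g)$, and $\srg{\cC}$ is the disk with diameter $[\mu_h, L_h]$. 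In particular the generalized-disk hypothesis holds, $\cA,\cB$ are SRG-full monotone classes, $\cC$ is an SRG-full single-valued class, and all SRGs are nonempty, so Fact~\ref{fact:tight-coeff-dys} reduces the tight Lipschitz constant of $\opT_{\partial f, \partial g, \nabla h, \alpha, \lambda}$ to $\sup_{z \in \cZ^{\mathrm{DYS}}_{\cA,\cB,\cC,\alpha,\lambda}}\abs{z} = \max \abs{\zeta_{\mathrm{DYS}}(z_A, z_B, z_C;\alpha,\lambda)}$, where $z_A, z_B, z_C$ range over the three disks above. It then suffices to upper bound $\abs{\zeta_{\mathrm{DYS}}}^2$ by $\rho_f^2$ pointwise over those disks.

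The key algebraic step is the identity
\[
  \zeta_{\mathrm{DYS}}(z_A,z_B,z_C;\alpha,\lambda)
  = (1-\lambda z_A)(1-\lambda z_B) + \lambda\big[(2-\alpha z_C)-\lambda\big]\,z_A z_B ,
\]
obtained by completing $(1-\lambda z_A)(1-\lambda z_B)$ out of the definition of $\zeta_{\mathrm{DYS}}$. Since $z_C$ enters only affinely, $\abs{(2-\alpha z_C)-\lambda}$ is the modulus of an affine function whose maximum over $\Di(\tfrac{\mu_h+L_h}{2},\tfrac{L_h-\mu_h}{2})$ is attained at a real endpoint (by the maximum-modulus principle, Fact~\ref{fact:max-mod}), and that maximum is exactly $d$. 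The triangle inequality then yields $\abs{\zeta_{\mathrm{DYS}}} \le \abs{1-\lambda z_A}\,\abs{1-\lambda z_B} + \lambda d\,\abs{z_A}\,\abs{z_B}$.

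Now I would apply a weighted Cauchy--Schwarz inequality to this two-term sum: for any $t>0$,
\[
  \abs{1-\lambda z_A}\,\abs{1-\lambda z_B} + \lambda d\,\abs{z_A}\,\abs{z_B}
  \le \sqrt{\abs{1-\lambda z_A}^2 + t\,\abs{z_A}^2}\;\sqrt{\abs{1-\lambda z_B}^2 + \tfrac{\lambda^2 d^2}{t}\,\abs{z_B}^2}.
\]
The factors decouple, and each may be maximized separately over its disk. The decisive choice is $t = \lambda(1/C_f - \lambda)$, which is positive precisely because $\lambda < 1/C_f$; this is where the hypothesis enters. With this $t$ one has $\lambda^2 + t = \lambda/C_f$, so the first factor equals $1 - 2\lambda\Re z_A + (\lambda/C_f)\abs{z_A}^2$; substituting the circle relation $\abs{z_A}^2 = 2C_f\Re z_A - (C_f^2 - R_f^2)$ valid on $\Ci(C_f,R_f)$ makes the coefficient of $\Re z_A$ vanish, so the factor is constant on the boundary and equals $1 - \lambda\tfrac{C_f^2-R_f^2}{C_f}$. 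As the factor is convex in $z_A$, its maximum over the disk is attained on the boundary, hence equals this constant. For the second factor $\tfrac{\lambda^2 d^2}{t} = \tfrac{\lambda d^2}{1/C_f - \lambda}$, and again by convexity the maximum sits on $\Ci(C_g,R_g)$, where $\abs{1-\lambda z_B}^2 + \tfrac{\lambda d^2}{1/C_f-\lambda}\abs{z_B}^2$ is affine in $\Re z_B$ (via the same circle relation) and is therefore maximized at a real endpoint $z_B \in \{1/(1+\alpha\mu_g),\,1/(1+\alpha L_g)\}$, producing the two candidate terms. Multiplying the two factor-maxima gives exactly $\rho_f^2$.

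Finally, the bound $\rho_g$ follows immediately from the symmetry of $\zeta_{\mathrm{DYS}}$ in $z_A$ and $z_B$: repeating the argument with the roles of $f$ and $g$ (equivalently $\cA$ and $\cB$) exchanged, and the weight $t = \lambda(1/C_g - \lambda)$ under $\lambda < 1/C_g$, yields $\rho_g^2$. I expect the main obstacle to be not any single estimate but the tuning of the free weight $t$: one must recognize that $t = \lambda(1/C_f - \lambda)$ is exactly the value that annihilates the $\Re z_A$-dependence on the circle and collapses the $f$-factor to a constant. Verifying that this bound is the correct closed form --- in particular that the convexity arguments legitimately push both maximizations to the disk boundaries and that the $g$-maximization lands at the two real endpoints --- is the part that requires care.
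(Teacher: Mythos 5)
Your analytic core is exactly the paper's own proof: the same rearrangement $\zeta_{\mathrm{DYS}} = (1-\lambda z_A)(1-\lambda z_B) + \lambda(2-\lambda-\alpha z_C)\,z_A z_B$, the same bound $\abs{2-\lambda-\alpha z_C}\le d$, the same weighted Cauchy--Schwarz split (your weight $t=\lambda(1/C_f-\lambda)$ is precisely the paper's choice, written there as $\lambda d r^{-1}$ with $r = d/(1/C_f-\lambda)$), the same collapse of the $f$-factor to the constant $1-\lambda(C_f^2-R_f^2)/C_f$ on $\Ci(C_f,R_f)$, and the same reduction of the $g$-factor to the two real endpoints (the paper's Fact~\ref{fact:distance-to-circ}); the symmetry argument for $\rho_g$ is also identical.

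There is, however, one genuine error in your setup. You take $\cA = \partial\cF_{\mu_f,L_f}$, $\cB = \partial\cF_{\mu_g,L_g}$, $\cC = \partial\cF_{\mu_h,L_h}$ and assert these are SRG-full, but they are not, and SRG-fullness is a hypothesis of Fact~\ref{fact:tight-coeff-dys}. SRG-fullness requires that \emph{every} operator whose SRG is contained in $\cG(\partial\cF_{\mu,L})$ belong to the class; yet, for example, a nonsymmetric linear map on $\R^2$ (a rotation composed with a scaling) can have its SRG inside the disk $\cG(\partial\cF_{\mu,L})$ while failing to be the subdifferential of any function. The paper flags exactly this obstacle and repairs it by replacing the three classes with their SRG-full hulls $\cD_f$, $\cD_g$, $\cD_h$ (all operators whose SRG lies inside the corresponding disk): these are SRG-full by construction, have the same SRGs, and contain $\partial f$, $\partial g$, $\nabla h$, so the supremum in Fact~\ref{fact:tight-coeff-dys} over $\opT_{\cD_f,\cD_g,\cD_h,\alpha,\lambda}$ upper-bounds the Lipschitz constant of the particular operator $\opT_{\partial f,\partial g,\nabla h,\alpha,\lambda}$. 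With this one-line substitution --- which changes none of your subsequent computations, since only the SRGs (the disks) enter them --- your argument goes through and coincides with the paper's proof.
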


	\begin{theorem}\label{thm:main-result-2}
		Let $f$, $g$, $h$, $\mu_f$, $L_f$, $\mu_g$, $L_g$, $\mu_h$, $L_h$, $\lambda$, and $\alpha$ be the same as in Theorem~\ref{thm:main-result-1}.
		Additionally, assume $\lambda < 2 - \frac{\alpha (\mu_h + L_h)}{2}$.
            Write
		\begin{gather*}
			\nu_f = \min\left\{ 
			\frac{2 \mu_f + \mu_h}{(1 + \alpha \mu_f)^2}, \frac{2 L_f + \mu_h}{(1 + \alpha L_f)^2} \right\},\quad
			\nu_g = \min\left\{ 
			\frac{2 \mu_g + \mu_h}{(1 + \alpha \mu_g)^2}, \frac{2 L_g + \mu_h}{(1 + \alpha L_g)^2} \right\},
			\\
			\theta = \frac{2}{4 - \alpha(\mu_h + L_h)},
		\end{gather*}
        where we define $\infty / \infty^2 = 0$ so that $\nu_f = 0$ when $L_f = \infty$ and $\nu_g = 0$ when $L_g = \infty$.
		Then, $\opT_{\partial f, \partial g, \nabla h, \alpha, \lambda}$ is $\rho$-contractive, where
		\begin{align*}
			\rho^2 = 1 - \lambda \theta + \lambda \sqrt{\left(\theta - \alpha \nu_f \right) \left(\theta - \alpha \nu_g \right)}.
		\end{align*}
	\end{theorem}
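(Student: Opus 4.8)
The plan is to apply the tight-Lipschitz-coefficient machinery of Fact~\ref{fact:tight-coeff-dys} to reduce the claim to a scalar optimization over three planar disks, and then to carry out that optimization. Since $\partial f \in \partial\cF_{\mu_f,L_f}$, $\partial g \in \partial\cF_{\mu_g,L_g}$, and $\nabla h \in \nabla\cF_{\mu_h,L_h}$, the operator $\opT_{\partial f,\partial g,\nabla h,\alpha,\lambda}$ belongs to the class $\opT_{\partial\cF_{\mu_f,L_f},\,\partial\cF_{\mu_g,L_g},\,\nabla\cF_{\mu_h,L_h},\,\alpha,\lambda}$, so its Lipschitz constant is bounded by the class supremum. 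First I would check the hypotheses of Fact~\ref{fact:tight-coeff-dys}: the classes $\partial\cF_{\mu_f,L_f}$ and $\partial\cF_{\mu_g,L_g}$ are SRG-full and monotone, $\nabla\cF_{\mu_h,L_h}$ is SRG-full and single-valued, and $\srg{\opI+\alpha\,\partial\cF_{\mu_f,L_f}}=1+\alpha\,\srg{\partial\cF_{\mu_f,L_f}}$ is a generalized disk (a disk when $0<\mu_f<L_f<\infty$ and a half-plane when $\mu_f=0$ or $L_f=\infty$) by Facts~\ref{fact:srg-trans} and \ref{fact:cvx-srg-first}. This identifies
$$\rho = \sup_{z\in\cZ^{\text{DYS}}_{\partial\cF_{\mu_f,L_f},\,\partial\cF_{\mu_g,L_g},\,\nabla\cF_{\mu_h,L_h},\,\alpha,\lambda}} \abs{z}$$
as the quantity to bound.

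Next I would compute the three constituent SRGs explicitly. By Fact~\ref{fact:cvx-srg-first}, $\srg{\partial\cF_{\mu_f,L_f}}$ is the disk $\Di\!\left(\tfrac{\mu_f+L_f}{2},\tfrac{L_f-\mu_f}{2}\right)$ (degenerating to a half-plane when $\mu_f=0$ or $L_f=\infty$). Applying Fact~\ref{fact:srg-trans}, $\srg{\resa{\partial\cF_{\mu_f,L_f}}}=(1+\alpha\,\srg{\partial\cF_{\mu_f,L_f}})^{-1}$; since $z\mapsto(1+\alpha z)^{-1}$ is a M\"obius map preserving the real line and real-axis symmetry, it carries the diameter endpoints $\mu_f,L_f$ to $\tfrac{1}{1+\alpha L_f},\tfrac{1}{1+\alpha\mu_f}$, so $\srg{\resa{\partial\cF_{\mu_f,L_f}}}=\Di(C_f,R_f)$, and likewise $\srg{\resa{\partial\cF_{\mu_g,L_g}}}=\Di(C_g,R_g)$. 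Crucially, because the resolvent inverts (possibly unbounded) half-planes to bounded disks, all three sets $\Di(C_f,R_f)$, $\Di(C_g,R_g)$, and $\srg{\nabla\cF_{\mu_h,L_h}}=\Di\!\left(\tfrac{\mu_h+L_h}{2},\tfrac{L_h-\mu_h}{2}\right)$ are compact. Hence, since $\zeta_{\text{DYS}}$ is a polynomial in $(z_A,z_B,z_C)$, Fact~\ref{fact:max-mod} reduces the supremum to the product of the three bounding circles.

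To organize the optimization I would rewrite the integrand using $\theta=\big(2-\tfrac{\alpha(\mu_h+L_h)}{2}\big)^{-1}$, isolating the $z_C$-dependence in a centered term:
$$\zeta_{\text{DYS}}(z_A,z_B,z_C;\alpha,\lambda)=(1-\lambda\theta)+\frac{\lambda}{\theta}(z_A-\theta)(z_B-\theta)-\lambda\alpha\,z_A z_B\Big(z_C-\tfrac{\mu_h+L_h}{2}\Big).$$
Because $z_C-\tfrac{\mu_h+L_h}{2}$ ranges over the centered disk of radius $\tfrac{L_h-\mu_h}{2}$ and $\zeta_{\text{DYS}}$ is affine in $z_C$, maximizing over the $z_C$-circle yields $\abs{(1-\lambda\theta)+\tfrac{\lambda}{\theta}(z_A-\theta)(z_B-\theta)}+\lambda\alpha\tfrac{L_h-\mu_h}{2}\abs{z_A}\abs{z_B}$, after which it remains to maximize over $z_A\in\Ci(C_f,R_f)$ and $z_B\in\Ci(C_g,R_g)$. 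The assumption $\lambda<2-\tfrac{\alpha(\mu_h+L_h)}{2}=1/\theta$ ensures $1-\lambda\theta>0$, which is what makes the leading constant of $\rho^2$ positive and the eventual square roots real.

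The main obstacle is this final joint optimization over the two angular parameters of $z_A$ and $z_B$ (together with reconciling the isolated $z_C$ term). The factor $\sqrt{(\theta-\alpha\nu_f)(\theta-\alpha\nu_g)}$ in the claimed $\rho^2$ shows that the extremizing $z_A,z_B$ are genuinely non-real, so the maximum is not attained at the real diameter endpoints alone; the geometric-mean structure is the signature of a balance condition coupling the two angles (equivalently, of a spectral quantity of an associated $2\times2$ form whose two principal parts separate into an $f$-factor and a $g$-factor). I expect the computation to proceed by writing $z_A=C_f+R_f e^{i\phi_A}$ and $z_B=C_g+R_g e^{i\phi_B}$, expanding $\abs{\zeta_{\text{DYS}}}^2$, and optimizing; the bookkeeping is delicate because the $L_h$-dependence coming from the radius term must combine with the $\theta$-dependence from the centered term so that only $\mu_h$ survives inside each $\nu$. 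Concretely, I would verify that $\alpha\nu_f$ equals the minimum of $2z_A-(2-\alpha\mu_h)z_A^2$ over the two real endpoints $z_A\in\{\tfrac{1}{1+\alpha\mu_f},\tfrac{1}{1+\alpha L_f}\}$ (and symmetrically for $\nu_g$), which pins down the boundary data governing the extremum, and then confirm that the optimized value of $\abs{\zeta_{\text{DYS}}}^2$ collapses to $1-\lambda\theta+\lambda\sqrt{(\theta-\alpha\nu_f)(\theta-\alpha\nu_g)}$.
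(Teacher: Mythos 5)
Your reduction follows the paper's own route for most of the way: you invoke Fact~\ref{fact:tight-coeff-dys} and Fact~\ref{fact:max-mod}, identify the resolvent SRGs as the disks $\Di(C_f,R_f)$ and $\Di(C_g,R_g)$, and your decomposition of $\zeta_{\text{DYS}}$ is algebraically identical to the paper's, since $2-\theta^{-1}-\alpha z_C=-\alpha\bigl(z_C-\tfrac{\mu_h+L_h}{2}\bigr)$. One flaw in your hypothesis check: the classes $\partial\cF_{\mu,L}$ are \emph{not} SRG-full (an operator whose SRG lies in $\cG(\partial\cF_{\mu,L})$ need not be a subdifferential), and the paper says so explicitly; it repairs this by passing to the enlarged, by-construction SRG-full classes $\cD_f$, $\cD_g$, $\cD_h$ consisting of all operators whose SRGs lie in the corresponding sets. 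These have the same SRGs and contain $\partial f$, $\partial g$, $\nabla h$, so the conclusion transfers to the original operators. This is fixable, but as written your appeal to Fact~\ref{fact:tight-coeff-dys} is not justified.

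The genuine gap is the final optimization. After maximizing over $z_C$ you must bound
\[
\abs{(1-\lambda\theta)+\tfrac{\lambda}{\theta}(z_A-\theta)(z_B-\theta)}+\lambda\alpha\tfrac{L_h-\mu_h}{2}\abs{z_A}\abs{z_B}
\]
over $z_A\in\Ci(C_f,R_f)$ and $z_B\in\Ci(C_g,R_g)$, and you propose to expand in the two angles and optimize jointly, expecting the answer to collapse to the claimed $\rho$. That plan is aimed at the wrong target: the claimed $\rho$ is not the exact maximum of this expression but an upper bound produced by a specific decoupling inequality, namely the Cauchy--Schwarz estimate of Fact~\ref{fact:CS}. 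Applying $(\sqrt{ab}+\sqrt{cd})^2\le(a+c)(b+d)$ with $a=\theta^{-1}\abs{z_A-\theta}^2$, $c=(2-\alpha\mu_h-\theta^{-1})\abs{z_A}^2$, and $b$, $d$ the analogous $z_B$ quantities splits the coupled problem into two independent one-variable maximizations of the quadratic form $\theta^{-1}\abs{z-\theta}^2+(2-\alpha\mu_h-\theta^{-1})\abs{z}^2$ on each circle; Fact~\ref{fact:distance-to-circ} then shows each factor is maximized at the \emph{real} endpoints, yielding $\theta-\alpha\nu_f$ and $\theta-\alpha\nu_g$. So the geometric mean in $\rho$ is the signature of this Cauchy--Schwarz step, not of non-real extremizers or an angle-balance condition as you conjectured. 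Your endpoint identity (that $\alpha\nu_f$ is the minimum of $2z-(2-\alpha\mu_h)z^2$ over $z\in\{\tfrac{1}{1+\alpha\mu_f},\tfrac{1}{1+\alpha L_f}\}$) is exactly right, but without the decoupling inequality you have no path from the joint two-circle problem to those endpoint values, and the direct trigonometric optimization you describe would be substantially harder and would not in general reproduce the stated bound.
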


		We remark that linear convergence of the DYS is implied only when $\min\{L_f,L_g\}<\infty$ and $\max\{\mu_f,\mu_g\}>0$. 
		If these conditions are not satisfied, Theorems~\ref{thm:main-result-1} and \ref{thm:main-result-2} yield a contraction factor of \(1\), which does not imply linear convergence.

	\subsection{\texorpdfstring{Proofs of Theorems~\ref{thm:main-result-1} and \ref{thm:main-result-2}}{Proofs}}
	
	To apply Fact~\ref{fact:tight-coeff-dys}, we need $\cA$, $\cB$, and $\cC$ to be SRG-full operator classes. While the choices
	\[
		\cA = \partial \cF_{\mu_f, L_f}, 
		\quad
		\cB = \partial \cF_{\mu_g, L_g},
		\quad
		\cC = \partial \cF_{\mu_h, L_h}
	\]
    would be natural, these classes are not SRG-full.
	Therefore we introduce the following operator classes:
	\begin{align*}
		\cD_f &= \{ \opA \colon \cH \rightrightarrows \cH \,|\, 
			\cG(\opA) \subseteq \cG(\partial \cF_{\mu_f, L_f})
		\}, \\
		\cD_g &= \{ \opB \colon \cH \rightrightarrows \cH \,|\, 
			\cG(\opB) \subseteq \cG(\partial \cF_{\mu_g, L_g})
		\}, \\
		\cD_h &= \{ \opC \colon \cH \rightrightarrows \cH \,|\, 
			\cG(\opC) \subseteq \cG(\partial \cF_{\mu_h, L_h})
		\}.
	\end{align*}
	To elaborate, we gather all operators that have their SRG within $\cG(\partial \cF_{\mu_f, L_f})$ to form $\cD_f$, and so on. 
    Then, $\cD_f$, $\cD_g$, and $\cD_h$ are SRG-full classes by definition.
	We now consider $\cA = \cD_f$, $\cB = \cD_g$, and $\cC = \cD_h$ in the following proof.
	
    We quickly mention two elementary facts.
	\begin{fact}
		\label{fact:CS}
			For $a,b,c,d\in[0,\infty)$,
			\[
			\left(\sqrt{ab} + \sqrt{cd}\right)^2 \le (a + c)(b + d).
			\]
	\end{fact}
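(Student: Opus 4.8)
The plan is to reduce the claimed inequality to the scalar AM--GM inequality by expanding both sides. First I would expand the left-hand side as
\[
\left(\sqrt{ab} + \sqrt{cd}\right)^2 = ab + 2\sqrt{ab}\sqrt{cd} + cd = ab + 2\sqrt{abcd} + cd,
\]
where the square roots are well defined because $a,b,c,d \in [0,\infty)$. Expanding the right-hand side gives
\[
(a+c)(b+d) = ab + ad + bc + cd.
\]
Subtracting the common terms $ab + cd$ from both sides, the asserted inequality is therefore equivalent to
\[
2\sqrt{abcd} \le ad + bc.
\]

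Next I would recognize this reduced inequality as an instance of AM--GM. Setting $x = ad$ and $y = bc$, which are both nonnegative, we have $\sqrt{abcd} = \sqrt{(ad)(bc)} = \sqrt{xy}$, so the claim becomes $2\sqrt{xy} \le x + y$. This follows at once from the nonnegativity of a perfect square, namely
\[
(\sqrt{x} - \sqrt{y})^2 = x - 2\sqrt{xy} + y \ge 0,
\]
and rearranging yields $2\sqrt{xy} \le x+y$. This establishes the reduced inequality and hence the Fact.

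Alternatively, one could skip the reduction entirely and read the statement directly as the Cauchy--Schwarz inequality in $\reals^2$ applied to the vectors $(\sqrt{a}, \sqrt{c})$ and $(\sqrt{b}, \sqrt{d})$: the left-hand side is the square of their inner product, while the right-hand side is the product of their squared norms. I expect no genuine obstacle here, since the result is elementary; the only points deserving a moment of care are verifying that every radicand is nonnegative (guaranteed by the hypothesis $a,b,c,d \ge 0$) and that the factorization $\sqrt{abcd} = \sqrt{(ad)(bc)}$ is legitimate, which again holds because all factors are nonnegative.
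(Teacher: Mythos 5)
Your proposal is correct. The paper's entire proof is the one-line remark that the inequality ``is an instance of Cauchy--Schwartz,'' without even specifying the vectors involved; so your \emph{alternative} argument --- Cauchy--Schwarz in $\reals^2$ applied to $(\sqrt{a},\sqrt{c})$ and $(\sqrt{b},\sqrt{d})$ --- is exactly the paper's route, made explicit. Your \emph{primary} argument is genuinely different and more elementary: expanding both sides, cancelling $ab+cd$, and reducing the claim to
\[
2\sqrt{(ad)(bc)} \le ad + bc,
\]
which is AM--GM, i.e.\ the nonnegativity of $\bigl(\sqrt{ad}-\sqrt{bc}\bigr)^2$. What each buys: the expansion-plus-AM--GM route is fully self-contained and requires no named theorem, and it also makes visible exactly where slack occurs (equality iff $ad = bc$); the Cauchy--Schwarz route is shorter and generalizes immediately to sums of more than two terms, e.g.\ $\bigl(\sum_i \sqrt{a_i b_i}\bigr)^2 \le \bigl(\sum_i a_i\bigr)\bigl(\sum_i b_i\bigr)$, which is the form one would want if the DYS analysis ever involved more than two resolvent factors. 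Both arguments are complete and careful about nonnegativity of the radicands, so there is no gap either way.
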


         \begin{proof}
             This inequality is an instance of Cauchy--Schwarz.
         \end{proof}
			
	\begin{fact}\label{fact:distance-to-circ}
	Let $k$, $l$, and $r$ be positive real numbers, and $b$, $c$ be real numbers.
	For $z \in \Ci(c, r)$,
	\[
		k\abs{z - b}^2 + l\abs{z}^2 
	\]
	is maximized at $z = c - r$ or $z = c + r$.
	\end{fact}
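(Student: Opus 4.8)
The plan is to reduce the problem to maximizing an affine function over an interval. Since $b$ and $c$ are real, I would parametrize a point $z \in \Ci(c, r)$ by its real and imaginary parts, writing $x = \Re z$ and $y = \Im z$, so that the constraint $\abs{z - c} = r$ becomes $(x - c)^2 + y^2 = r^2$. In particular $x$ ranges over the interval $[c - r, c + r]$ and $y^2 = r^2 - (x - c)^2$.

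Next I would expand the objective. Because $b$ is real,
\[
	k\abs{z - b}^2 + l\abs{z}^2 = k\big[(x - b)^2 + y^2\big] + l\big[x^2 + y^2\big] = k(x - b)^2 + l x^2 + (k + l) y^2.
\]
Substituting $y^2 = r^2 - (x - c)^2$ then yields a function of the single real variable $x$,
\[
	F(x) = k(x - b)^2 + l x^2 + (k + l)\big[r^2 - (x - c)^2\big], \qquad x \in [c - r, c + r].
\]

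The key observation is that the coefficient of $x^2$ in $F$ is $k + l - (k + l) = 0$, so the quadratic terms cancel and $F$ is in fact an affine function of $x$; explicitly, $F(x) = 2\big[(k + l)c - k b\big] x + \mathrm{const}$. An affine function on a compact interval attains its maximum at one of the two endpoints $x = c - r$ or $x = c + r$, and at each of these $y = 0$, so the corresponding point is the real number $z = c - r$ or $z = c + r$. This establishes the claim.

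I expect no genuine obstacle here: once the objective is written in terms of $x$ and $y^2$ and the circle constraint is used to eliminate $y^2$, the cancellation of the $x^2$ terms is automatic and the conclusion is immediate. The only point requiring a little care is to note that the objective depends on $z$ only through $x$ and $y^2$, so the two preimages $\pm y$ of each interior value of $x$ give the same value of $F$, and the maximum is therefore legitimately located at the two real endpoints.
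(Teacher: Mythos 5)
Your proof is correct, but it takes a genuinely different route from the paper's. The paper's proof is a one-line completion of squares: it rewrites
$k\abs{z - b}^2 + l\abs{z}^2 = (k+l)\abs{z - \tfrac{kb}{k+l}}^2 + \tfrac{klb^2}{k+l}$,
so that maximizing the objective over $\Ci(c, r)$ amounts to maximizing the distance from the fixed real point $\tfrac{kb}{k+l}$ to a point of the circle, which is attained at $z = c - r$ or $z = c + r$ (whichever is diametrically opposite that point). You instead coordinatize: writing $z = x + iy$ and eliminating $y^2$ through the circle equation, you observe that the quadratic terms cancel, leaving an affine function of $x$ on $[c - r, c + r]$, which is maximized at an endpoint, where $y = 0$. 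The two arguments are computationally equivalent at bottom --- the paper's squared distance $\abs{z - \tfrac{kb}{k+l}}^2$ is itself affine in $\Re z$ along the circle, which is precisely your cancellation --- but your version is self-contained: it does not appeal to the (unproved, though obvious) geometric fact about the farthest point of a circle from a real point, and it makes explicit both the reduction to one real variable and the point that the objective depends on $z$ only through $\Re z$ and $(\Im z)^2$, so the maximum over the circle really is the maximum over the interval. The paper's version buys brevity and a geometric picture; yours buys a purely mechanical verification with no geometric step left implicit.
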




			
			


	\begin{proof}[Proof to Fact~\ref{fact:distance-to-circ}]
		Observe that 
		\[
			k\abs{z - b}^2 + l\abs{z}^2 
			= (k + l)\left| z - \frac{kb}{k + l} \right|^2 + \frac{k l b^2}{k + l}.
		\]
		and distance from $\frac{kb}{k + l}$ to $z \in \Ci(c, r)$ is maximized at $z = c - r$ if $\frac{kb}{k + l} > c$ and $z = c + r$ otherwise.
	\end{proof}

    We now prove Theorem~\ref{thm:main-result-1}, \ref{thm:main-result-2}.
			
	\begin{proof}[Proof to Theorem~\ref{thm:main-result-1}]
		We first prove the first statement and show the other by the same reasoning. Invoking  Fact~\ref{fact:tight-coeff-dys} and Fact~\ref{fact:max-mod}, it suffices to show that
		\[
			\abs{\zeta_{\text{DYS}}(z_f, z_g, z_h; \alpha, \lambda)}^2 \le \rho_f^2
		\]
		for
		\begin{align*}
			z_f &\in \partial\srg{\resa{\cD_f}} = \Ci\left( C_f, R_f \right), \\
			z_g &\in \partial\srg{\resa{\cD_g}} = \Ci\left( C_g, R_g \right), \\
			z_h &\in \partial\srg{\cD_h} = \Ci\left( \frac{L_h + \mu_h}{2}, \frac{L_h - \mu_h}{2} \right)
		\end{align*}
		when $\lambda < 1 / C_f$ holds.
		We refer the readers to Fact~\ref{fact:srg-trans} and Fact~\ref{fact:cvx-srg-first} to see why $\partial\srg{\resa{\cD_f}}$, $\partial\srg{\resa{\cD_g}}$, $\srg{\cD_h}$ are given as above.

		Denoting $r = \frac{d}{1 / C_f - \lambda}$, we have
		\begin{align}
			&\abs{\zeta_{\text{DYS}}(z_f, z_g, z_h; \alpha, \lambda)}^2 \nonumber \\
			&= \abs{1 - \lambda z_f - \lambda z_g + \lambda(2 - \alpha z_h) z_f z_g}^2 \nonumber \\
			&= \abs{(1 - \lambda z_f)(1 - \lambda z_g) + \lambda(2 - \lambda - \alpha z_h) z_f z_g}^2 \nonumber \\
			&\le \left( \abs{(1 - \lambda z_f)(1 - \lambda z_g)} + \abs{\lambda(2 - \lambda - \alpha z_h) z_f z_g} \right)^2 \nonumber \\
			&\stackrel{(i)}{\le} \left( \abs{(1 - \lambda z_f)(1 - \lambda z_g)} + \lambda d \abs{z_f z_g} \right)^2 \nonumber \\
			&\stackrel{(ii)}{\le} \left( \abs{1 - \lambda z_f}^2 + \lambda d r^{-1} \abs{z_f}^2 \right)
			\left( \abs{1 - \lambda z_g}^2 + \lambda d r \abs{z_g}^2 \right),
			\label{eq:r-split-CS}
		\end{align} 
		where $(i)$ follows from $\abs{2 - \lambda - \alpha z_h} \le \max\{ \abs{2 - \lambda - \alpha\mu_h}, \abs{2 - \lambda - \alpha L_h}\} = d$ and $(ii)$ follows from Fact~\ref{fact:CS}.

		Recall that $\partial\srg{\resa{\cD_f}} = \Ci(C_f, R_f)$.
		This renders
		\begin{equation}\label{eq:main-thm-former}
			\abs{1 - \lambda z_f}^2 + \lambda d r^{-1} \abs{z_f}^2
			= \frac{\lambda}{C_f} \abs{z_f - C_f}^2 + 1 - \lambda C_f
			= 1 - \lambda \frac{C_f^2 - R_f^2}{C_f}.
		\end{equation}
		For the other term, $z_g = \frac{1}{1 + \alpha \mu_g}$ or $z_g = \frac{1}{1 + \alpha L_g}$  give the maximum, invoking Fact~\ref{fact:distance-to-circ}. Therefore,
        \begin{align}
            &\abs{1 - \lambda z_g}^2 + \lambda d r \abs{z_g}^2 \nonumber \\
            &\le
            \max \bigg\{
                \left(1 - \frac{\lambda}{1 + \alpha \mu_g}\right)^2 + \frac{\lambda d^2}{1 / C_f - \lambda} \left( \frac{1}{1 + \alpha \mu_g} \right)^2, \nonumber \\
                &\qquad\qquad\left( 1 - \frac{\lambda}{1 + \alpha L_g} \right)^2 + 
                \frac{\lambda d^2}{1 / C_f - \lambda} \left( \frac{1}{1 + \alpha L_g} \right)^2 
            \bigg\}. \label{eq:main-thm-latter}
        \end{align}
		Plugging \eqref{eq:main-thm-former} and \eqref{eq:main-thm-latter} into \eqref{eq:r-split-CS}, we obtain
		\[
			\abs{\zeta_{\text{DYS}}(z_f, z_g, z_h; \alpha, \lambda)}^2 \le \rho_f^2
		\]
		which concludes the proof for the first statement.
		The same reasoning can be applied to prove the second statement.
	\end{proof}

	\begin{proof}[Proof to Theorem~\ref{thm:main-result-2}]
		By the same reasoning in the proof of Theorem~\ref{thm:main-result-1}, it suffices to show that
		\[
			\abs{\zeta_{\text{DYS}}(z_f, z_g, z_h; \alpha, \lambda)} \le \rho
		\]
		for
		\begin{align*}
			z_f \in \partial\srg{\resa{\cD_f}}, \quad
			z_g \in \partial\srg{\resa{\cD_g}}, \quad
			z_h \in \partial\srg{\cD_h}.
		\end{align*}

		Recalling that 
		$\partial\srg{\cD_h} = \Ci\left( \frac{L_h + \mu_h}{2}, \frac{L_h - \mu_h}{2} \right)$ and $\theta = \frac{2}{4 - \alpha (\mu_h + L_h)}$, we have
		\begin{equation}\label{eq:main-thm-2-eq1}
			\abs{2 - \theta^{-1} - \alpha z_h} = \alpha \babs{z_h - \frac{L_h + \mu_h}{2}} 
			= \alpha \frac{L_h - \mu_h}{2}.
		\end{equation}
		Now, observe
		\begin{align}
			&\abs{\zeta_{\text{DYS}}(z_f, z_g, z_h; \alpha, \lambda) - (1 - \lambda \theta)}^2 \nonumber\\
			&= \lambda^2 \abs{\theta - z_f - z_g + (2 - \alpha z_h) z_f z_g}^2 \nonumber\\
			&= \lambda^2 \abs{\theta^{-1}(z_f - \theta)(z_g - \theta) 
			+ (2 - \theta^{-1} - \alpha z_h) z_f z_g}^2 \nonumber\\
			&\le \lambda^2 \left( \theta^{-1} \abs{z_f - \theta} \abs{z_g - \theta}
			+ \abs{2 - \theta^{-1} - \alpha z_h} \abs{z_f} \abs{z_g} \right)^2 \nonumber\\
			&\stackrel{(i)}{=} \lambda^2 \left( \theta^{-1} \abs{z_f - \theta} \abs{z_g - \theta}
			+ \alpha\frac{L_h - \mu_h}{2} \abs{z_f} \abs{z_g} \right)^2 \nonumber\\
			&\stackrel{(ii)}{\le} \lambda^2 \left( \theta^{-1} \abs{z_f - \theta}^2 
			+ \alpha\frac{L_h - \mu_h}{2} \abs{z_f}^2 \right) \left( \theta^{-1} \abs{z_g - \theta}^2 
			+ \alpha\frac{L_h - \mu_h}{2} \abs{z_g}^2 \right). \label{eq:main-thm-2-eq2}
		\end{align}
		Here, $(i)$ follows from \eqref{eq:main-thm-2-eq1} and $(ii)$ follows from Fact~\ref{fact:CS}.

		Invoking Fact~\ref{fact:distance-to-circ},
		\[
			\theta^{-1} \abs{z_f - \theta}^2 + \alpha\frac{L_h - \mu_h}{2} \abs{z_f}^2
		\] 
		is maximized at either $z_f = \frac{1}{1 + \alpha L_f}$ or $z_f = \frac{1}{1 + \alpha \mu_f}$.
		The first term evaluates to
		\begin{align*}
			\theta^{-1} \abs{z_f - \theta}^2 
			+ \alpha\frac{L_h - \mu_h}{2} \abs{z_f}^2 
			= \theta - \alpha \frac{2 L_f + \mu_h}{(1 + \alpha L_f)^2}
		\end{align*}
		when $z_f = \frac{1}{1 + \alpha L_f}$, and
		\begin{align*}
			\theta^{-1} \abs{z_f - \theta}^2 
			+ \alpha\frac{L_h - \mu_h}{2} \abs{z_f}^2 
			= \theta - \alpha \frac{2 \mu_f + \mu_h}{(1 + \alpha \mu_f)^2}
		\end{align*}
		when $z_f = \frac{1}{1 + \alpha \mu_f}$. Hence,
		\begin{align}
			&\theta^{-1} \abs{z_f - \theta}^2 
			+ \alpha\frac{L_h - \mu_h}{2} \abs{z_f}^2 \nonumber \\
			&\le \theta - \alpha \min \left\{ 
				\frac{2 L_f + \mu_h}{(1 + \alpha L_f)^2}, 
				\frac{2 \mu_f + \mu_h}{(1 + \alpha \mu_f)^2} \right\} \nonumber \\
			&= \theta - \alpha \nu_f. \label{eq:main-thm-2-eq3}
		\end{align}
		Similarly, we have
		\begin{equation}\label{eq:main-thm-2-eq4}
			\theta^{-1} \abs{z_g - \theta}^2 
			+ \alpha\frac{L_h - \mu_h}{2} \abs{z_g}^2 
			\le \theta - \alpha \nu_g. 
		\end{equation}
		Plugging \eqref{eq:main-thm-2-eq3} and \eqref{eq:main-thm-2-eq4} into \eqref{eq:main-thm-2-eq2} and applying the triangle inequality results in the desired bound
		\[
			\abs{\zeta_{\text{DYS}}(z_f, z_g, z_h; \alpha, \lambda)}
			\le 1 - \lambda \theta + \lambda \sqrt {\left( \theta - \alpha \nu_f \right) \left( \theta - \alpha \nu_g \right)}.
		\]
	\end{proof}

	\subsection{Comparison with previous results}
    \label{ss:comparison}
	We now compare our linear convergence rates with existing results and show that our results are not worse than the prior rates in all cases and are strictly better for most cases.

    \paragraph{Comparison with Condat and Richt{\'a}rik \cite{CondatRichtarik2022_randprox}}
    Consider problem \eqref{eq:primal} where $f \in \cF_{0, L_f}$, $g \in \cF_{\mu_g, \infty}$, and $h \in \cF_{\mu_h, L_h}$ with the constants satisfying $\mu_g > 0$ or $\mu_h > 0$, $L_f$, $L_h \in (0, \infty)$, and $\alpha \in (0, 2/L_h)$. 
    Theorem~9 of \cite{CondatRichtarik2022_randprox} with $\omega = 0$ in its formulation gives a linear convergence rate of the DYS iteration (without averaging) as follows:
    \begin{equation}\label{eq:condat}
        \rho_\mathrm{prev}^2 = \max \left\{ 
            \frac{(1 - \alpha \mu_h)^2}{1 + \alpha \mu_g},
            \frac{(1 - \alpha L_h)^2}{1 + \alpha \mu_g},
            \frac{\alpha L_f}{\alpha L_f + 2}
        \right\}.
    \end{equation}
    In the same setting, we get the following convergence rate as a direct consequence of the second part of Theorem~\ref{thm:main-result-1}:
    \begin{equation}\label{eq:our-rate}
        \rho_\mathrm{ours}^2 = 
        \max \left\{ \frac{d^2}{1 + 2\alpha\mu_g}, 
        \frac{1}{(1 + \alpha L_f)^2} \left( \alpha^2 L_f^2 + \frac{d^2}{1 + 2\alpha \mu_g} \right) \right\},
    \end{equation}
    where $d = \max\{\abs{1 - \alpha \mu_h}, \abs{1 - \alpha L_h}\}$.
    Notably, our newly derived rate always satisfies $\rho_\mathrm{ours} \leq \rho_\mathrm{prev}$, and the strict inequality $\rho_\mathrm{ours} < \rho_\mathrm{prev}$ holds whenever $\mu_g > 0$.
    For brevity, we omit the detailed calculations verifying this result.

\begin{figure}
    \centering
    \includegraphics[width=0.66\textwidth]{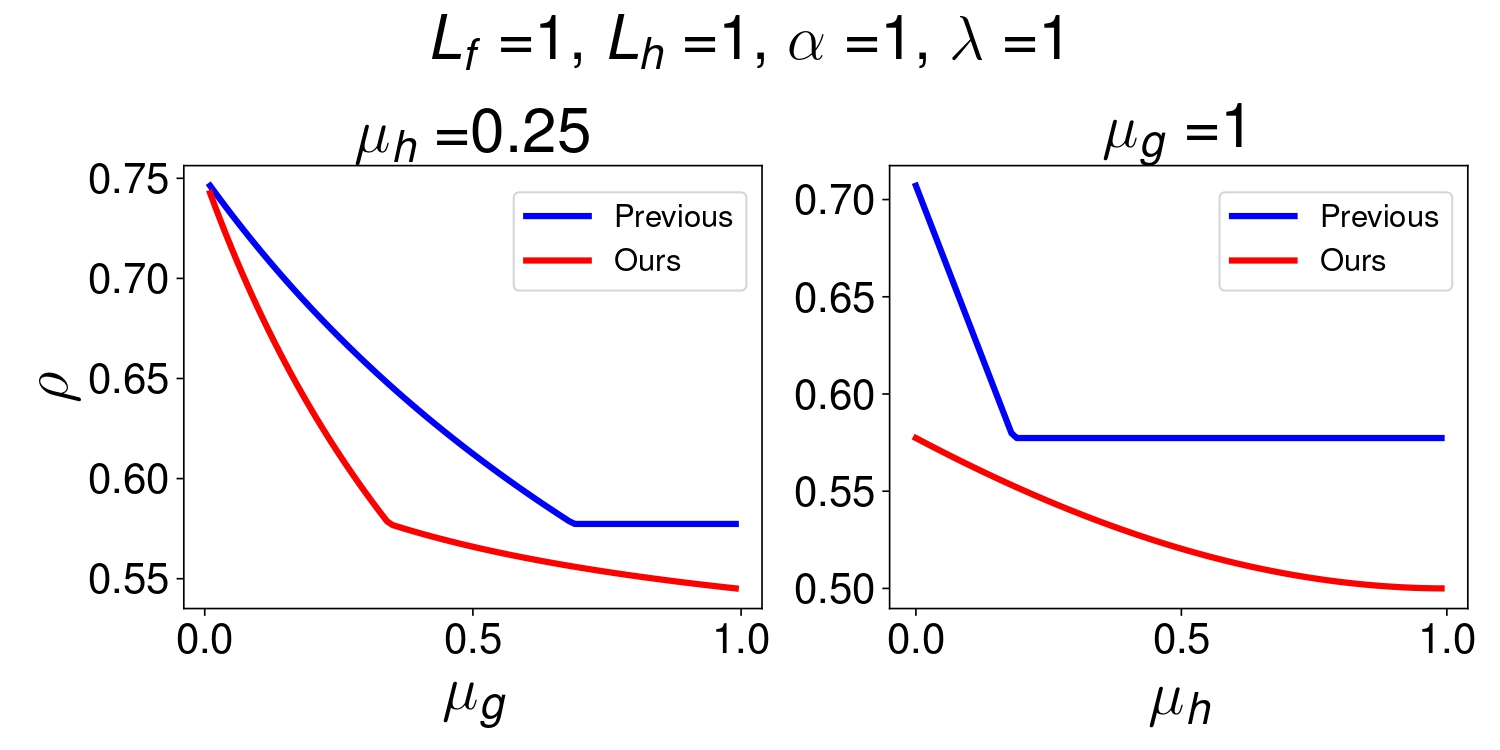}
    \caption{Convergence rate comparison between our rates and those of Condat and Richt{\'a}rik  \cite{CondatRichtarik2022_randprox}.
    With \(L_f=1\), \(L_h=1\), and \(\alpha=1\), we evaluated each contraction factor over the range of \(\mu_g\), and \(\mu_h\).}
    \label{fig:rate-comparison-1}
\end{figure}

	Figure~\ref{fig:rate-comparison-1} compares our convergence rates obtained against the prior results by Condat and Richt{\'a}rik \cite{CondatRichtarik2022_randprox} given by \eqref{eq:our-rate} and \eqref{eq:condat}, respectively. Evaluating on two different sweeps of strong convexity parameters $\mu_g$ and $\mu_h$ respectively for $f$ and $g$ in \eqref{eq:primal}, the figure shows that our analytical rates are consistently better than or match the previous rates.

    \paragraph{Comparison with Lee, Yi, and Ryu \cite{LeeYiRyu2022_convergencea}}
    We now compare our newly derived convergence rates across different settings with those implied by Theorems~3.1, 3.2, and 3.3 of \cite{LeeYiRyu2022_convergencea}. 
    As in the previous section, we omit the detailed computations supporting the comparisons.

    In the case where $f \in \cF_{\mu_f, L_f}$, $g \in \cF_{0, \infty}$, and $h \in \cF_{0, L_h}$, with $\alpha L_h < 4$ and $\lambda < 2 - \frac{\alpha L_h}{2}$,
    Theorem~3.1 of \cite{LeeYiRyu2022_convergencea} implies iterations with $\opT_{\partial f, \partial g, \nabla h, \alpha, \lambda}$ converges linearly with a rate 
    \[
        \rho_\mathrm{prev} = 1 - \frac{2\lambda}{4 - \alpha L_h} + \lambda \sqrt{ \frac{2}{4 - \alpha L_h}  \left( \frac{2}{4 - \alpha L_h} - \frac{2 \alpha \mu_f}{\alpha^2 L_f^2 + 2 \alpha \mu_f + 1} \right)}.
    \]
    Meanwhile, Theorem~\ref{thm:main-result-2} gives a linear convergence rate of 
    \[
        \rho_\mathrm{ours} = 1 - \frac{2\lambda}{4 - \alpha L_h} + \lambda \sqrt{ \frac{2}{4 - \alpha L_h}  \left( \frac{2}{4 - \alpha L_h} - 
        \alpha \min\left\{ \frac{2\mu_f}{(1 + \alpha\mu_f)^2}, \frac{2L_f}{(1 + \alpha L_f)^2} \right\}
        \right)}.
    \]
    It holds that $\rho_\mathrm{ours} \le \rho_\mathrm{prev}$, with the strict inequality $\rho_\mathrm{ours} < \rho_\mathrm{prev}$ as long as $\mu_f > 0$.

    Now, consider the setting where $f \in \cF_{0, L_f}$, $g \in \cF_{\mu_g, \infty}$, and $h \in \cF_{0, L_h}$. 
    Theorem~3.2 of \cite{LeeYiRyu2022_convergencea} implies iterations using $\opT_{\partial f, \partial g, \nabla h, \alpha, \lambda}$ renders a linear convergence with rate
    \[
        \rho_\mathrm{prev} = 
        \sqrt{1 - 2 \lambda \alpha \min\left\{ 
             \frac{(2-\lambda) \mu_g}{(1 + \alpha^2 L_f^2)(2-\lambda+2\alpha \mu_g)},
            \frac{( 2 - \lambda)(\mu_g + L_f) + 2\alpha \mu_g L_f }
            {(1 + \alpha L_f)^2(2 - \lambda + 2 \alpha \mu_g)}
        \right\}}.
    \]
    On the other hand, the second part of Theorem~\ref{thm:main-result-1} implies a linear convergence rate of 
    \[
        \rho_\mathrm{ours} = 
        \sqrt{1 - 2 \lambda \alpha \min\left\{ 
            \frac{(2-\lambda)\mu_g}{2-\lambda+2\alpha \mu_g},
            \frac{( 2 - \lambda)(\mu_g + L_f) + 2\alpha \mu_g L_f }
            {(1 + \alpha L_f)^2(2 - \lambda + 2 \alpha \mu_g)}
        \right\}}.
    \]
    As before, $\rho_\mathrm{ours} \le \rho_\mathrm{prev}$ holds, and we have the strict inequality $\rho_\mathrm{ours} < \rho_\mathrm{prev}$ if $(2 - \lambda)(1 - 2\alpha\mu_g + \alpha^2 L_f^2) + 2\alpha\mu_g(1 + \alpha^2 L_f^2) > 0$ and $\mu_g > 0$.
		
    For the last case, consider $f \in \cF_{0, L_f}$, $g \in \cF_{0, \infty}$, and $h \in \cF_{\mu_h, L_h}$.
    Theorem~3.3 of \cite{LeeYiRyu2022_convergencea} implies $\opT_{\partial f, \partial g, \nabla h, \alpha, \lambda}$ renders the fixed point iteration with a linear convergence rate of
    \[
        \rho_\mathrm{prev} = 
        \sqrt{1 - 2\lambda\alpha\min\left\{ 
            \frac{ \mu_h \left( 1 - \frac{\alpha L_h}{2(2-\lambda)} \right)}{1 + \alpha^2 L_f^2},
            \frac{L_f + \mu_h \left( 1 - \frac{\alpha L_h}{2(2-\lambda)} \right)}{(1 + \alpha L_f)^2}
            \right\}}.
    \]
    In contrast, the second part of Theorem~\ref{thm:main-result-1} implies the same iterations linearly converge with a rate of
    \[
        \rho_\mathrm{ours} = 
        \sqrt{1 - 2\lambda\alpha \min\left\{ 
            \xi, 
            \frac{L_f + \xi}{(1 + \alpha L_f)^2}
        \right\}}
    \]
    denoting
    \[
        \xi = \min\left\{ \mu_h\left( 1 - \frac{\alpha \mu_h}{2(2 - \lambda)} \right), L_h\left( 1 - \frac{\alpha L_h}{2(2 - \lambda)} \right) \right\}.
    \]
    Again, it holds that $\rho_\mathrm{ours} \le \rho_\mathrm{prev}$, with the strict inequality $\rho_\mathrm{ours} < \rho_\mathrm{prev}$ whenever $\mu_h > 0$.

	\begin{figure}
		\centering
		\includegraphics[width=\textwidth]{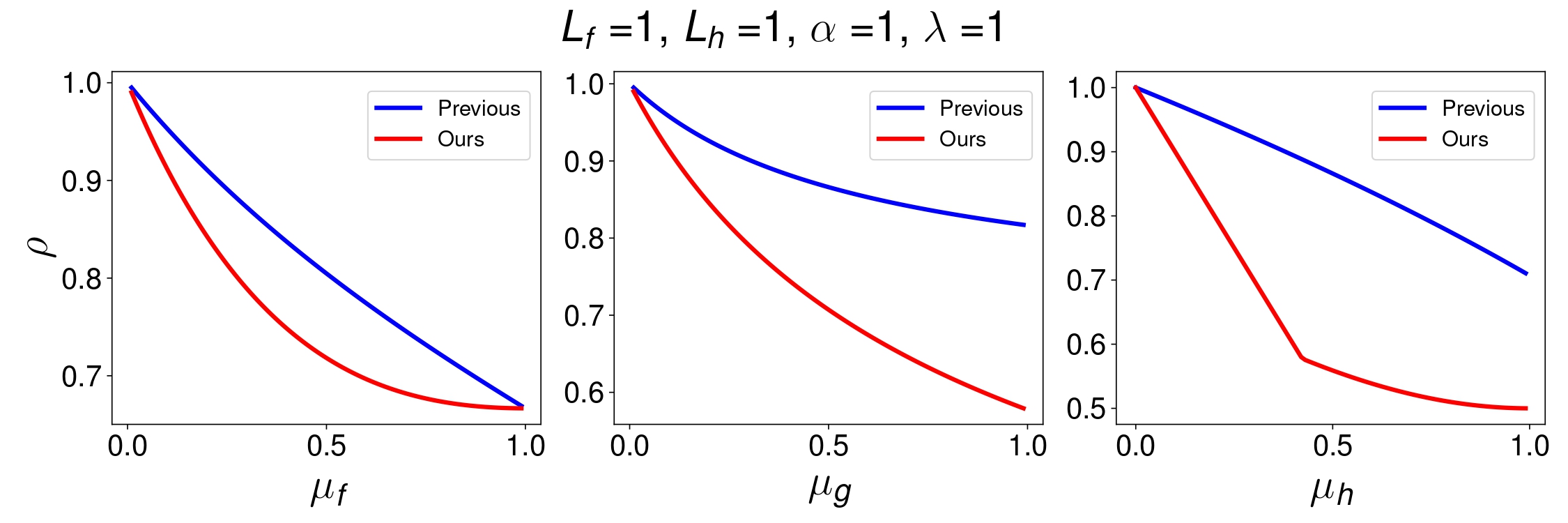}
		\caption{Convergence rate comparison between our rates and those of Lee, Yi, and Ryu \cite{LeeYiRyu2022_convergencea}. With \(L_f = 1\), \(L_h = 1\), \(\alpha=1\), and \(\lambda=1\), we evaluated each contraction factor over the range of \(\mu_f\), \(\mu_g\), and \(\mu_h\), each corresponding to the first, second, and third comparison cases respectively.}
		\label{fig:rate-comparison-2}
	\end{figure}

Figure~\ref{fig:rate-comparison-2} compares our convergence rates against the prior analytical rates from Lee, Yi, and Ryu \cite{LeeYiRyu2022_convergencea}. 
Again, our analytical rates provided by Theorems~\ref{thm:main-result-1} and \ref{thm:main-result-2} consistently outperform the prior rates across different settings.

	\paragraph{Comparison with FBS}
	\begin{figure}
		\centering
		\includegraphics[width=0.67\textwidth]{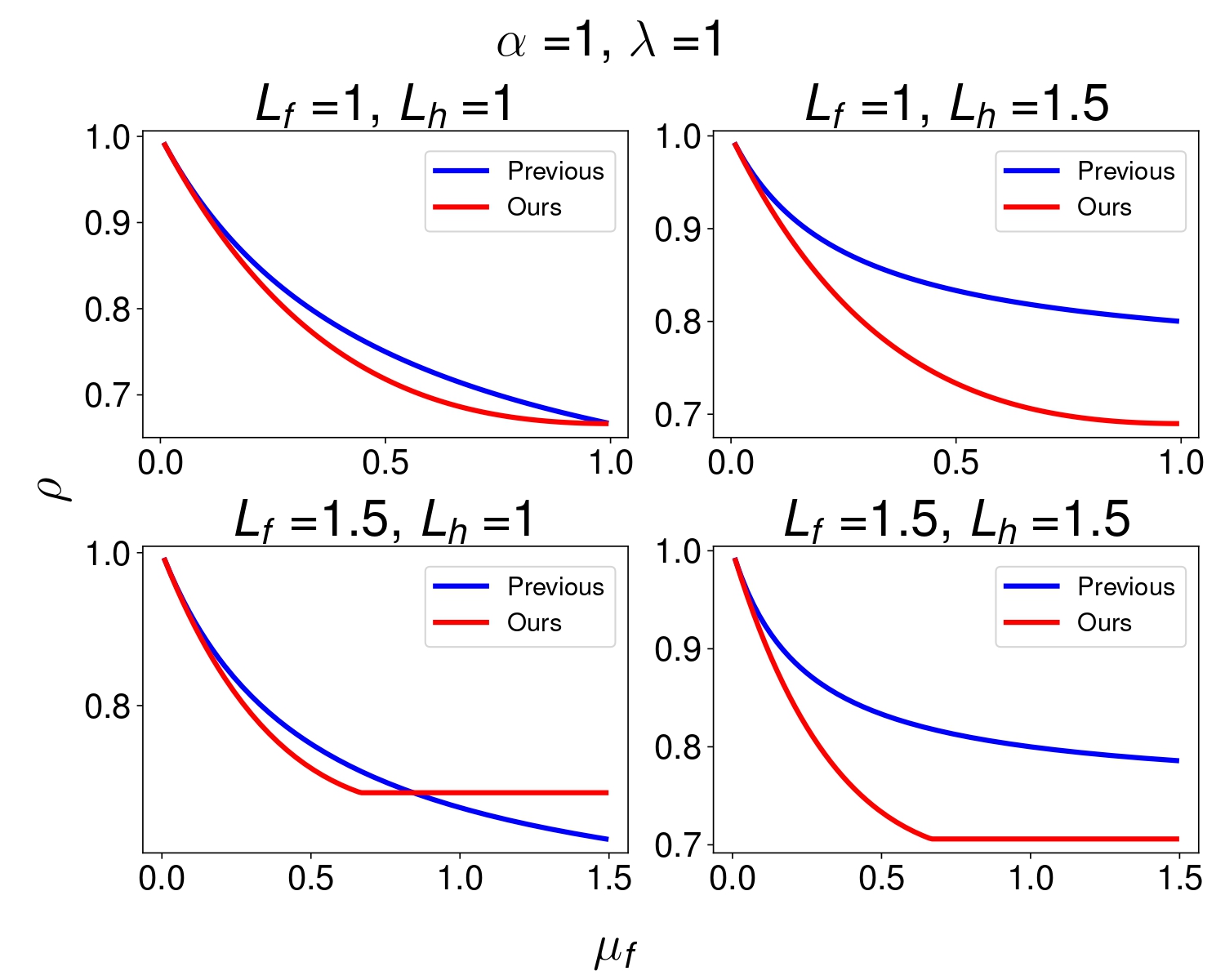}
		\caption{Comparison of our linear convergence rates and contraction factors of FBS provided in \cite{guo2021onthelinear}. 
		We consider FBS applied to \(f + g\) and \(h\) in \eqref{eq:primal}.
		The Lipschitz coefficients are computed across four different choices of \(L_f\) and \(L_h\), and for each setting, we evaluate the contraction factor by varying \(\mu_f\) across a range of values \((0, L_f)\).
		}
		\label{fig:rate-comparison-fbs}
	\end{figure}
	The linear convergence rates in this work can be compared to  known contraction factors of FBS by viewing the sum of two objective functions as a single function.
	In particular, we consider the setup where \(f \in \cF_{\mu_f, L_f}\), \(g \in \cF_{0, \infty}\), and \(h \in \cF_{0, L_h}\), and apply two-operators splitting with respect to the combined objective \(f + g\) and function \(h\) in \eqref{eq:primal}.
	Under these conditions, we have \(f + g \in \cF_{\mu_f, \infty}\), which allows to use the contraction factor of FBS provided in \cite{guo2021onthelinear}.
	Figure~\ref{fig:rate-comparison-fbs} shows that our contraction factors are generally better than the ones provided in \cite{guo2021onthelinear}.

\section{Discussion and conclusion}
 The reduction of Fact~\ref{fact:tight-coeff-dys} allows us to obtain the Lipschitz coefficients Theorems~\ref{thm:main-result-1} and \ref{thm:main-result-2} by characterizing the maximum modulus of
	\begin{align*}
	\cZ^{\text{DYS}}_{\cA, \cB, \cC, \alpha, \lambda} = 	\left\{ \zeta_{\text{DYS}}(z_f, z_g, z_h; \alpha, \lambda) \,\middle|\, 
		z_f \in \srg{\resa{\cD_f}},
		z_g \in \srg{\resa{\cD_g}},
		z_h \in \srg{\cD_h}
		\right\},
	\end{align*}
 where $\zeta_{\text{DYS}} = 1 - \lambda z_B + \lambda z_A (2z_B - 1 - \alpha z_C z_B) $ is a relatively simple polynomial of three complex variables.
This only requires elementary mathematics, and it is considerably easier than directly analyzing
	\begin{equation*}\label{eq:very-complex-set}
        \left\{ 
        \frac{\norm{\opT x - \opT y}}{\norm{x - y}} \,\middle|\,
        \opT \in \opT_{\cD_f, \cD_g, \cD_h, \alpha, \lambda}, \,
        x, y \in \dom\opT, \, x \ne y
        \right\}.
	\end{equation*}
Furthermore, by obtaining tighter bounds on the set $\cZ^{\text{DYS}}_{\cA, \cB, \cC, \alpha, \lambda}$, one can improve upon the contraction factors presented in this work.




The explicit and simple description of $\cZ^{\text{DYS}}_{\cA, \cB, \cC, \alpha, \lambda}$ allows one to investigate it in a numerical and computer-assisted manner. Sampling points from $\cZ^{\text{DYS}}_{\cD_f, \cD_g, \cD_h, \alpha, \lambda}$ is straightforward, and doing so provides a numerical estimate of the maximum modulus. For example, Figure~\ref{fig:dys_subdiff_example} depicts $\cZ^{\text{DYS}}_{\cD_f, \cD_g, \cD_h, \alpha, \lambda}$ with a specific choice of $\mu_f$, $\mu_g$, $\mu_h$, $L_f$, $L_g$, $L_h$, $\alpha$, and $\lambda$. It shows that $\rho_g$, the contraction factor of Theorem~\ref{thm:main-result-1}, is valid but not tight; the gap between $\cZ^{\text{DYS}}_{\cD_f, \cD_g, \cD_h, \alpha, \lambda}$ and $\Ci(0, \rho_g)$ indicates the contraction factor has room for improvement.
Interestingly, if we modify the proof of Theorem~\ref{thm:main-result-1} to choose $r$ in \eqref{eq:r-split-CS} more carefully, we seem to obtain a tight contraction factor in the instance of Figure~\ref{fig:dys_subdiff_example}.
    Specifically, when we numerically minimize $\rho$ as a function of $r$, we observe that $\Ci(0, \rho)$ touches $\cZ^{\text{DYS}}_{\cD_f, \cD_g, \cD_h, \alpha, \lambda}$ in Figure~\ref{fig:dys_subdiff_example} and the contact indicates tightness.

	\begin{figure}[h]
		\centering
		\includegraphics[width=0.7\textwidth]{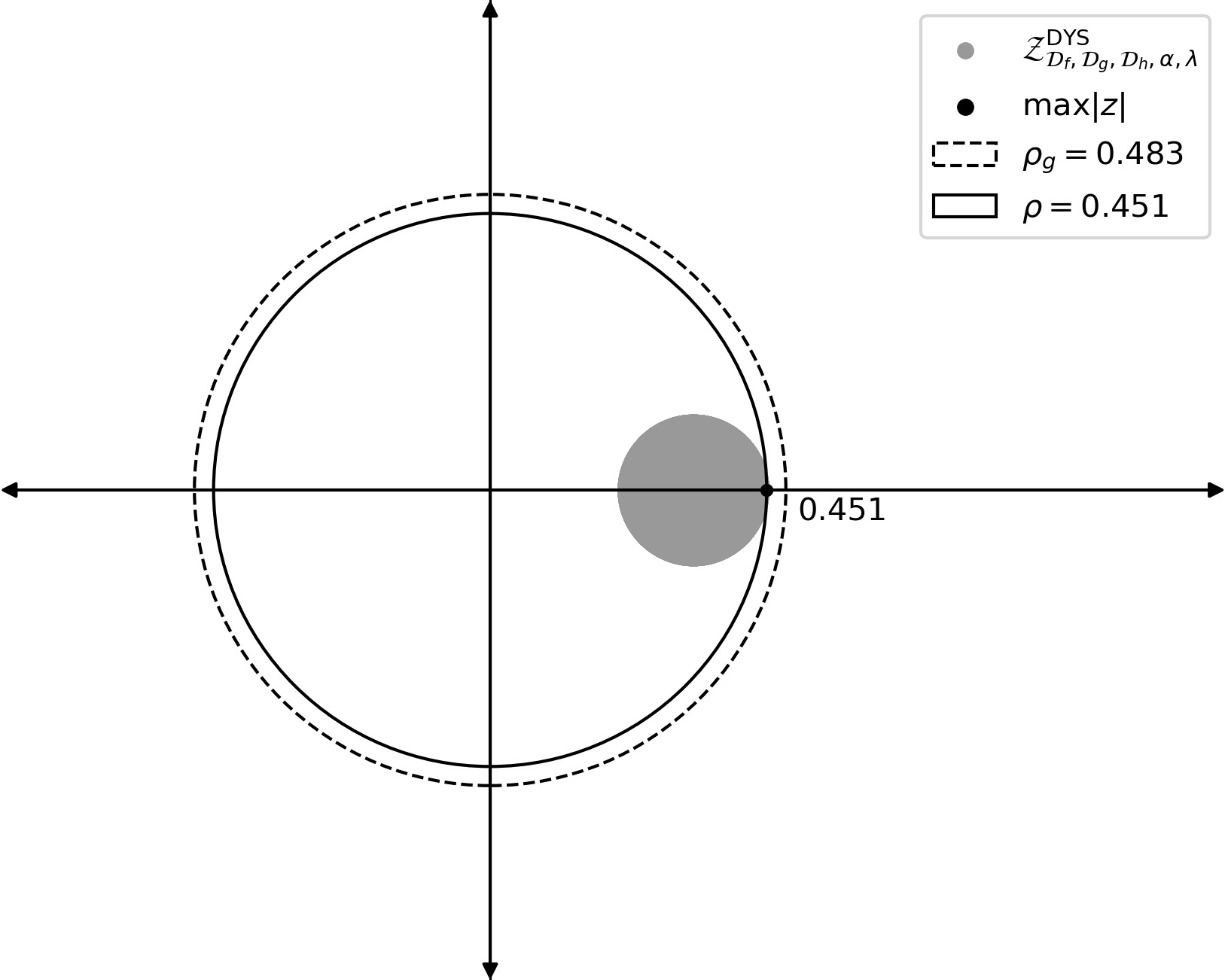}
		\caption{
		$\cZ^{\text{DYS}}_{\cD_f, \cD_g, \cD_h, \alpha, \lambda}$ with $\Ci(0, \rho_f)$, $\Ci(0, \rho_g)$, and $\Ci(0, \rho)$,
		where $\mu_f = 0.7$, $\mu_g = 2$, $\mu_h = 0.8$, $L_f = 1.5$, $L_g = 3$, $L_h = 1.3$, $\alpha = 0.9$, and $\lambda = 1$.}
		\label{fig:dys_subdiff_example}
	\end{figure}

\section*{Acknowledgments}
This work was supported by the Samsung Science and Technology Foundation (Project Number SSTF-BA2101-02). 

\bibliographystyle{elsarticle-num} 
\bibliography{main.bib}

\end{document}